\begin{document}

\mainmatter





\author{Vladislav Babenko, Yuliya Babenko, Nataliya Parfinovych,
Dmytro Skorokhodov}

\title{Exact asymptotics of the optimal $L_{p}$-error of asymmetric linear spline approximation}

\affiliation{Vladislav Babenko,\\
Department of Mathematical Analysis and Theory of Functions \\
Dnepropetrovsk National University \\
pr. Gagarina, 72, \\
Dnepropetrovsk, UKRAINE, 49050 \\
{\tt  babenko.vladislav@gmail.com}\\ \\

Yuliya Babenko,\\
Department of Mathematics and Statistics\\
Kennesaw State University\\
1000 Chastain Road, $\#$1601\\
Atlanta, GA, USA 30144-5591\\
{\tt ybabenko@kennesaw.edu}\\ \\

Nataliya Parfinovych,\\
Department of Mathematical Analysis and Theory of Functions \\
Dnepropetrovsk National University \\
pr. Gagarina, 72, \\
Dnepropetrovsk, UKRAINE, 49050 \\
{\tt nparfinovich@yandex.ru}\\ \\

Dmytro Skorokhodov,\\
Department of Mathematical Analysis and Theory of Functions \\
Dnepropetrovsk National University \\
pr. Gagarina, 72, \\
Dnepropetrovsk, UKRAINE, 49050 \\
{\tt dmitriy.skorokhodov@gmail.com}
}

\thanks{This research was partially conducted during visit of V. Babenko, N. Parfinovych and D. Skorokhodov to Kennesaw State University (supported by QEP International Faculty Development grant and Simons Collaboration Grant Award ID 210363)}

\maketitle

\markboth{\em Babenko V., Babenko Y., Parfinovych N., Skorokhodov D.}{\em Optimal $L_{p}$-error of asymmetric linear spline approximation}

\abstract{In this paper we study the best asymmetric (sometimes also called penalized or sign-sensitive) approximation in the metrics of the space $L_p$, $1\leqslant p\leqslant\infty$, of functions $f\in C^2\left([0,1]^2\right)$ with nonnegative Hessian by piecewise linear splines $s\in S(\triangle_N)$, generated by given triangulations $\triangle_N$ with $N$ elements. We find the exact asymptotic behavior of optimal (over triangulations $\triangle_N$ and splines $s\in S(\triangle_N)$) error of such approximation as $N\to \infty$. }

\keywords{spline, asymmetric approximation, adaptive approximation, exact asymptotics, optimal error, anisotropic partitions, triangulations}


\classificationps{41A15}{41A25, 41A60}

\section{Introduction} \label{Intro}

The question of approximation of functions defined on a polytope by piecewise polynomial functions (splines), generated with the help of a mesh (partition of the domain), in various metrics is of a great importance in Approximation Theory and its applications (numerical solutions for PDE's, surface simplification, image compression, terrain data processing etc.). 
Mainly, problems of approximation by interpolating splines have been considered. However, for various applications (as well as from theoretical point of view) problems of best and best one-sided approximation are important.
In Approximation Theory there exist two different tools to approximate functions: {\it uniform} methods, or methods that work rather well for all functions in a given class; and {\it adaptive} methods, or methods that take into account local variations (measured with the help of Hessian, curvature, their modifications etc.) in the behavior of each given function.
For methods which involve spline approximation, the adaptivity affects the construction of domain partitions and geometry (both size and shape) of its elements, which can be highly anisotropic.

In this paper we will consider the problem of adaptive approximation of twice differentiable functions by linear splines (naturally the domain partitions in this case are triangulations). This question has been studied extensively by many authors (see works~\cite{Nadler86, Daz3, huang, us, chen1, BBS, Cohen, JM} and references therein), but many interesting (from both theoretical and applied point of view) questions remain open.
In addition, note that the problem of surface approximation by linear splines is very close to an important problem in Geometry on approximation (in various metrics) of smooth convex bodies by various polytopes (inscribed, circumscribed, polytopes of best approximation etc.)
After some occasional results for bivariate functions, the book of L. Fejes~Toth~\cite{Toth} was the first to provide a large number of problems, ideas, and results on polytopal approximation in dimensions two and three, concentrating specifically on extremal properties of regular polytopes. Many extensions have been made afterwards to higher dimensions, other metrics etc. (see~\cite{Gr2, Bor, boro, Handbook} and references therein).  

In Approximation Theory there exists a tool to view both the problem of finding the best approximation without constraints and the problem of finding the best approximation with constraints ``under one umbrella''. This can be viewed as the best approximation in the spaces with asymmetric norm, or so-called ($\alpha,\beta$)-approximation (see, for example,~\cite{Bab82, Bab83, Korn}), when positive and negative parts of the difference between function and the approximant are ``weighted'' differently. Such type of approximations are of a separate interest since they can be considered as the problems of approximation with non-strict constraints (see below for more precise statements), when constraints are allowed to be violated, but the penalty for their violation is introduced into the error measure. Within this paper we will consider the questions of best $(\alpha,\beta)$-approximation by linear splines. We believe that such approach could be interesting and useful for some questions in Geometry as well.

Note that the construction of the best (in a specified sense) triangulation for approximation of an individual function,  or construction of the best polytope for an individual convex body, is an extremely difficult problem, and therefore it is natural to consider {\it asymptotically optimal sequences of polytopes} or {\it asymptotically optimal sequences of triangulations} (and splines defined on them). 

One possible method to construct asymptotically optimal sequences of triangulations or polytopes begins as follows. At the first step, we construct an intermediate approximation of the function (or convex body surface, respectively) by a piecewise quadratic function (surface). At the next step we approximate each quadratic piece by a piecewise linear function (spline) in the optimal way, generating a mesh of the domain. This in turn (at least in $\RR^2$) requires solving the following optimization problem (we will give its statement for approximation of functions in $\RR^d$). 

{\it Let a quadratic function $Q$ defined on $\RR^d$ be given. Consider the best approximation ($L_p$, asymmetric, one-sided) of $Q$ by linear functions on simplex $\mathcal{T}\subset\RR^d$ of unit volume. The problem is to find a simplex $\mathcal{T}^*$, for which the corresponding error is minimal. }

This problem is important also in a number of questions of Geometry and Approximation Theory. 
In~\cite{simplex} we have proved the optimality of a regular simplex in the formulated problem for the best $(\alpha,\beta)$-approximation in $L_p$-metric of function $Q(\textbf{x}) = \sum\limits_{j=1}^{d} x_j^{2}$ by linear functions. Note that with the help of linear transformations the solution of this problem allows us to obtain the solution of analogous optimization problems for an arbitrary positive definite quadratic form. 

In this paper we will study the behavior of the optimal error of  ($\alpha,\beta$)-approximation in $L_p$-metric of an arbitrary $C^2$ function with nonnegative Hessian. The main contributions of this paper are:
\begin{enumerate}[topsep=0pt, partopsep=0pt, itemsep=2pt, parsep=2pt]
\item We present the construction of asymptotically optimal sequence of partitions and error estimates without assumption on the Hessian to be bounded away from zero. Remark that geometers were able to remove restrictions of such type in some of their problems before (see~\cite{Bor}). However, we use another technique to handle the problem.
\item We impose no restrictions on triangulations (many existing works require some type of ``admissibility'').
\item We consider asymmetric approximation, which, as special cases, includes the cases of interpolating splines, splines of best approximation, splines of best one-sided approximation etc.
\end{enumerate}

The paper is organized as follows. In Section~\ref{Defs} we begin by introducing major concepts and definitions, in particular related to asymmetric approximation and asymptotically optimal triangulations. In Subsection~\ref{S2.3} we present the main questions that we will address in this paper, and state related geometric problems in Subsection~\ref{S3}. Subsection~\ref{S6} contains statements of the main results. In Section~\ref{ideas} we introduce the major ideas for the proofs of the main results without much of technical details. In particular, we relate the problem of describing exact asymptotics of the optimal error with some geometric problems. Statements of the solutions to these problems are presented in Section~\ref{S55}, which also contains additional geometric observations needed later for the lower estimate of the error. Section~\ref{S7} is dedicated to the construction of ``good'' triangulation for each fixed $N$ and the proof of estimate from above for the optimal error. The estimate from below is contained in Section~\ref{S8}.

\section {Notation, definitions, main questions, and results} \label{Defs}


Let the domain be $D:=[0,1]^2 \subset \RR^2$. We use this region for simplicity; the approach presented in this paper can be applied to any bounded connected region which is a finite union of triangles. By $C(D)$ we denote the space of functions continuous on $D$. Let $L_p:=L_{p}(D)$, $0< p\leqslant \infty$, be the space of measurable functions $f:D\to\RR$ such that $\|f\|_p < \infty$ where
$$
	\|f\|_p 
	= 
	\|f\|_{L_p(D)} 
	:= 
	\left\{
		\begin{array}{ll}
			\left(\displaystyle\int_{D} |f (x,y)|^p \,dx\,dy\right)^{1/p}, 
				&
			\textrm{if}\;\;\;    0< p < \infty, 
			\\ [10pt]
			\textrm{ess sup} \{|f (x,y) |\,:\,(x,y) \in D \}, 
				&
			\textrm{if}\;\;\;       p =\infty.
		\end{array}
	\right.
$$
For $p\geqslant 1$, $\|\cdot\|_p$ is the standard norm in space $L_p$. In addition, we use notation $\|\cdot\|_p$ when $p<1$ only for statement of main results. 

\subsection{Asymmetric approximation}\label{S2.1}
Let $f\in L_p$ and let $H$ be a subspace of $L_p$. By $E(f;H)_p$ we denote the best approximation of the function $f$ by the subspace $H$ in the $L_p$-metric, i.e.: 
$$
	E(f;H)_p
	:=
	E(f;H)_{L_p(D)}=\inf\{\|f-u\|_p\;:\;u\in H\}. 
$$
In addition, by 
$$
	E^{\pm}(f;H)_p
	:=
	E^{\pm}(f;H)_{L_p(D)}=\inf\{\|f-u\|_p\;:\;\pm u(x,y)\leqslant\pm f(x,y),\;(x,y)\in D\;\textrm{and}\;u\in H\}
$$
we denote the best one-sided approximation of the function $f$ by the subspace $H$ in the $L_p$-metric. In the case of $``+''$ in the above definition we say that we have {\it approximation from below}; in the case  of $``-''$ we say that we have {\it approximation from above}.

For $\alpha,\beta>0$ and $f\in L_p$, $1\leqslant p\leqslant\infty$, we define the {\it asymmetric ($\alpha,\beta$)-norm} as follows 
$$
	\|f\|_{p;\alpha,\beta}
	=
	\|f\|_{L_{p;\alpha,\beta}(D)}=\|\alpha f_++\beta f_-\|_{p},
$$
where $g_{\pm}(x,y)=\max\{\pm g(x,y);0\}$. Following the literature, we call $\|f\|_{p;\alpha,\beta}$ the {\it asymmetric norm}. Note that it satisfies the norm axioms except for the fact that we only have $\|\lambda f\|_{p;\alpha,\beta} = \lambda\|f\|_{p;\alpha,\beta}$ for $\lambda \geqslant 0$ (in particular, $\|f\|_{p;\alpha,\beta} \ne \|-f\|_{p;\alpha,\beta}$ for $\alpha\ne\beta$). Asymmetric norms in connection with various problems in Approximation Theory were considered in papers~\cite{Krein, Bab82, Dolzhenko1, Dolzhenko2} and books~\cite{KreinNudelman, Korn}.

By $E(f;H)_{p;\alpha,\beta}$ we denote the best ($\alpha,\beta$)-approximation~\cite{Bab82} of the function $f$ by the subspace $H$ in the $L_p$-metric, i.e.: 
$$
	E(f;H)_{p;\alpha,\beta}
	:=
	E(f;H)_{L_{p;\alpha,\beta}(D)}=\inf\{\|f-u\|_{p;\alpha,\beta}:\,u\in H\}.
$$ 
Note that for $\alpha=\beta=1$ we have $E(f;H)_{p;1,1}=E(f;H)_{p}$. V.~Babenko proved in~\cite{Bab82} that if $H\subset L_p(D)$, $1\leqslant p < \infty$, is locally compact, then for any $f\in L_p(D)$ the following limit relations hold true (see also~\cite{Korn}, Theorem~1.4.10): 
\begin{equation}
\label{ter}
	\lim_{\beta\to+\infty}E(f;H)_{p;1,\beta}
	=
	E^+(f;H)_{p}\qquad \textrm{and}\qquad \lim_{\alpha\to+\infty}E(f;H)_{p;\alpha,1}
	=
	E^-(f;H)_{p},
\end{equation} 
which are monotone in $\alpha$ and $\beta$. This allows us to include the problem of the best unconstrained approximation and the problem of the best one-sided approximation into the family of problems of the same type, and consider them from a general point of view (for more on this motivation, see~\cite{Bab83,Bab84}). In what follows we will allow the value $+\infty$ for $\alpha$ or $\beta$, in that case identifying $E(f;H)_{p;\alpha,\beta}$ with the corresponding one-sided approximation. Because of the relation 
$$
	\left\|f-u\right\|_{p;1,\beta}^p
	=
	\left\|f-u\right\|_{p}^p + (\beta^p-1)\left\|(f-u)_-\right\|_p^p, 
		\qquad \beta>1, 
$$
the problem of the best $(1,\beta)$-approximation can be considered as the problem of the best approximation with non-strict constraint $f(x,y)\leqslant u(x,y)$, $(x,y)\in D$. This constraint is allowed to be violated, but the penalty 
$$
	(\beta^p-1)\left\|(f-u)_-\right\|_{p}^p
$$
for the violation is introduced into the error measure. In what follows we will allow the value $+\infty$ for $\alpha$ or $\beta$, in that case identifying $E(f;H)_{p;\alpha,\beta}$ with the corresponding one-sided approximation. 

\subsection{Optimal triangulations and asymptotically optimal sequences of triangulations} \label{S2.2}

Let $N\in\NN$. A collection $\triangle_N=\triangle_N(D)=\{T_i\}^N_{i=1}$ of $N$ triangles in the plane is called a {\it triangulation} of the set $D$ provided that

$\left.1\right)$ any pair of triangles from $\triangle_N$ intersect at most at a common vertex or along a common edge; 

$\left.2\right)$ $D=\displaystyle \bigcup\limits_{i=1}^N{T_i}$.

Let $\mathcal{P}_1$ be the set of bivariate linear polynomials $p(x,y)=ax+by+c$, with $a,b,c\in\RR$. Given a triangulation $\triangle_N$, define the class of linear splines $\mathcal{S}(\triangle_N)$ as follows 
$$
	\mathcal{S}(\triangle_N)
	:=
	\left\{ 
		f \in C(D): 
		\forall i=1,...,N\;\; 
		\exists p_i\in \mathcal{P}_1\;
		\textrm{such\;that}\;
		f|_{T_i}=p_i|_{T_i}
	\right\}.
$$

Now let the function $f\in C^2(D)$ and the number $N$ of triangles be fixed. Set
$$
	R_N(f,L_{p; \alpha, \beta})
	:=
	\inf_{\triangle_N} E(f;\mathcal{S}(\triangle_N))_{p; \alpha, \beta} 
	=
	\displaystyle \inf_{\triangle_N}\inf_{s\in\mathcal{S}(\triangle_N)}\|f-s\|_{p; \alpha, \beta}.
$$
This quantity we will call the {\it optimal} $L_p$-error of piecewise linear ($\alpha,\beta$)-approximation of the function $f$ on triangulations with $N$ elements. A triangulation $\triangle_N^0$ and the corresponding spline $s^0_N\in\mathcal{S}(\triangle_N^0)$ are called {\it $L_{p;\alpha,\beta}$-optimal} for the given function $f$ if 
$$
	\left\|f-s^0_N\right\|_{p; \alpha , \beta }
	=
	R_N(f,L_{p; \alpha , \beta }).
$$  

Note that the quantity $R_N(f,L_{p;1,1})$ coincides with the error of the best $L_p$-approximation of function $f$ by linear splines from $\mathcal{S}(\triangle_N)$. In addition, in view of~(\ref{ter}) in the case $\alpha=1$ and $\beta\to\infty$ ($\beta=1$ and $\alpha\to\infty$) the quantity $R_N(f,L_{p;\alpha,\beta})$ tends to the error of the best $L_p$-approximation of function $f$ from above (below) by splines from $\mathcal{S}(\triangle_N)$. Remark that the latest statement does not immediately follow from~(\ref{ter}). However, the proof of it is rather simple and we omit it here.


\subsection{Main questions} \label{S2.3}
In this paper we will work with functions $f\in C^2(D)$, where $D$ for simplicity is taken to be a unit square $[0,1]^2\subset \RR^2$.
It is well known that for most of such functions (i.e. for functions with the Hessian not identically equal to zero) the order of the optimal error $R_N(f,L_{p; \alpha, \beta})$ is $\frac 1N$ as $N\to \infty$. Our goal is to study sharp asymptotic behavior of the quantity $R_N(f,L_{p; \alpha, \beta})$ as $N\to \infty$. To that end, we will prove the existence of the limit of $N\cdot R_N(f,L_{p; \alpha, \beta})$ as $N\to \infty$, and will find its exact value. In turn, our analysis will allow one to obtain information about construction of asymptotically optimal sequences of triangulations.

We will study this problem in the case when the given function has nonnegative Hessian. In the case when the Hessian is strictly positive there exist a lot of results (see works~\cite{Nadler86, Daz3, us, chen1, BBS, Cohen, JM} and references therein). One of the most studied questions is the problem of approximation by linear interpolating splines. Note that in the case when Hessian is strictly positive interpolating splines obviously coincide with splines of best (one-sided) approximation from above. However, even for the case of strictly positive Hessian the questions of finding sharp asymptotics of the optimal error in the cases of approximating by splines of best approximation, best approximation from below, and, in general, best asymmetric approximation remain open. Finding solutions to these questions is one of two main goals of the present paper. Besides that, in all these questions we will remove the restriction of Hessian being bounded away from zero to allow algorithms to be applicable for wider range of surfaces. In~\cite{Bor} B\"or\"oczky addressed this nontrivial question in the related case of approximating smooth convex bodies by some inscribed polytopes. 

As for the little investigated case of negative Hessian, the only known (at least to us) explicit result is~\cite{us}.

\subsection{Related geometric problems} \label{S3}

The essential role in further results is played by the solution of the following extremal problems. Let $q(x,y):= x^2+y^2$. 

{
\bf Problem~1. 
\it For $\alpha,\beta>0$ and $1\leqslant p\leqslant\infty$, find 
\begin{equation}
\label{Cp}
 	C_{p;\alpha,\beta}
 	:=
 	\displaystyle \inf_{T} \frac{E(q;\mathcal{P}_1)_{L_{p;\alpha,\beta}(T)}}{|T| ^{1+1/p}},
\end{equation}
where the infimum is taken over all triangles $T$ in $\RR^2$ and $|T|$ stands for the area of triangle $T$.
}

Solution to this problem allows solving a similar problem for arbitrary positive definite quadratic form (see Section~\ref{S4} for details). 

Note that Problem~1 is a generalization of the following problems. 

{
\bf Problem~2. 
\it For $1\leqslant p\leqslant\infty$, find 
\begin{equation}
\label{Cpprosto}
	C_{p}
	:=
	\displaystyle \inf_{T} \frac{E(q;\mathcal{P}_1)_{L_{p}(T)}}{|T| ^{1+1/p}} 
	\qquad\textrm{and}\qquad 
	C_{p}^{\pm}
	:=
	\displaystyle \inf_{T} \frac{E^{\pm}(q,\mathcal{P}_1)_{L_{p}(T)}}{|T| ^{1+1/p}}.
\end{equation}
}

The constant $C_p^-$ coincides with the best $L_p$-error of interpolation of $q$ by linear functions over triangles of unit area.

To the best of our knowledge the progress on the problem of computing the constant $C_p^-$ (see Problem~3 above) can be outlined as follows: 
\begin{enumerate}[topsep=0pt, partopsep=0pt, itemsep=2pt, parsep=2pt]
\item $p=\infty$ (D'Azevedo and Simpson~\cite{Daz3}, 1989); 
\item $p=1$ (B\"or\"oczky, Ludwig~\cite{boro}, 1999); 
\item $p=2$ (Pottmann et al~\cite{kodla1}, 2000);
\item $p\in\NN$ (Chen~\cite{chen1}, 2007);
\item $p\in(1,\infty)$ (V.~Babenko, Yu.~Babenko, and Skorokhodov~\cite{BBS}, and independently Chen (\cite{chen}, 2008) for any dimension $d$).
\end{enumerate} 

The most general constant $C_{p;\alpha,\beta}$ for any dimension $d\in \NN$ was found by the authors in~\cite{simplex}. Note that the infimum in the definition of constant $C_{p;\alpha,\beta}$ is achieved only on regular simplices. 


{\bf Remark~1.} Note that the $\inf$ in all the constants $C_{p;\alpha,\beta}$, $C_p$, and $C^{\pm}_p$, that are solutions of Problems 1, 2 and 3 posed in the previous section, are achieved on regular triangles. This fact for $C_{p; \alpha, \beta}$ was proved in greater generality (for any dimension $d$) in~\cite{simplex}. 

\subsection{Main results} \label{S6}


The following theorem is the main result of this paper.

\begin{theorem}\label{Th2}
Let $f \in C^2(D)$ be such that $H(f;x,y)\geqslant 0$ for all $(x,y) \in D$. Then for all $\alpha,\beta>0$ and $1\leqslant p\leqslant\infty$,
\begin{equation}
\label{T2}
	\lim_{N \to \infty} N \cdot R_N(f,L_{p; \alpha , \beta })
	= 
	2^{-1} C_{p;\alpha,\beta}\cdot\left\|\sqrt{H}\right\|_{\frac{p}{p+1}}, 
\end{equation}
where $C_{p;\alpha,\beta}$ was defined in~(\ref{Cp}). 
\end{theorem}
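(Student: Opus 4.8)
The plan is to prove the two matching inequalities
\[
\limsup_{N\to\infty} N\cdot R_N(f,L_{p;\alpha,\beta}) \leqslant 2^{-1}C_{p;\alpha,\beta}\left\|\sqrt{H}\right\|_{\frac{p}{p+1}}
\]
and
\[
\liminf_{N\to\infty} N\cdot R_N(f,L_{p;\alpha,\beta}) \geqslant 2^{-1}C_{p;\alpha,\beta}\left\|\sqrt{H}\right\|_{\frac{p}{p+1}}.
\]
The guiding heuristic, standard in the theory of optimal anisotropic meshes, is that on a small triangle $T$ the function $f$ looks like a quadratic Taylor polynomial $Q_{x_0}(x,y) = f(x_0) + \nabla f(x_0)\cdot(\,\cdot - x_0) + \tfrac12 (\,\cdot-x_0)^{\mathsf T} D^2f(x_0)(\,\cdot-x_0)$, and since adding a linear function changes neither side, the local error is governed by the purely quadratic part, whose associated constant is $C_{p;\alpha,\beta}$ times a power of the determinant $\det D^2 f(x_0) = H(x_0)$ and a power of $|T|$. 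More precisely, for a positive semidefinite form $\Phi$ with $\det\Phi = H$, an affine change of variables (normalizing $\Phi$ to $q(x,y)=x^2+y^2$ up to the degenerate directions) shows that $\inf_{T:\,|T|=A} E(\Phi;\mathcal P_1)_{L_{p;\alpha,\beta}(T)} = C_{p;\alpha,\beta}\, H^{1/2}\cdot (1/2)^{?}\cdot A^{1+1/p}$ with the appropriate normalization, and the factor $2^{-1}$ in the statement is exactly the bookkeeping from the Hessian (half the second-order Taylor term) combined with the area normalization in~(\ref{Cp}); I would isolate this in a lemma. This is presumably among the geometric facts collected in Section~\ref{S55}.

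For the upper estimate I would first treat the case when $H$ is bounded away from zero on $D$, which is the technically cleaner situation. Cover $D$ by a fine uniform grid of small squares; on each square replace $f$ by its quadratic Taylor polynomial, split the square into triangles adapted to the local Hessian (using the optimal-triangle geometry realizing $C_{p;\alpha,\beta}$, rescaled so that the number of triangles in each square is proportional to the local density $H^{p/(2(p+1))}$), and sum the local errors. Choosing the triangle counts by the classical optimization of a sum under a cardinality constraint — minimize $\sum_i c_i\, n_i^{-1/p}$ subject to $\sum_i n_i = N$, whose solution distributes triangles proportionally to $c_i^{p/(p+1)}$ — produces the Riemann sum $\left(\int_D H^{p/(2(p+1))}\right)^{(p+1)/p} = \left\|\sqrt H\right\|_{p/(p+1)}$ in the limit, with the constant $2^{-1}C_{p;\alpha,\beta}$ out front. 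The Taylor remainder is $o(1)$ relative to the main term because the triangles shrink uniformly; one must be a little careful near $\partial D$ and at square boundaries, where a negligible number of ``filler'' triangles is spent. To remove the hypothesis $\inf H>0$, which is one of the advertised novelties, I would approximate: on the region where $H<\varepsilon$ the contribution to the target is controlled by $\|\sqrt{H+\varepsilon}\|_{p/(p+1)}$ (monotone in $\varepsilon$), and on that region one can still build a competing triangulation by comparing $f$ with the perturbed function $f_\varepsilon$ whose Hessian is $H+\varepsilon$ (e.g.\ $f_\varepsilon = f + \tfrac\varepsilon2(x^2+y^2)$), bounding $\|f - s\|_{p;\alpha,\beta} \leqslant \|f_\varepsilon - s\|_{p;\alpha,\beta} + \|f-f_\varepsilon\|_{p;\alpha,\beta}$ and letting $\varepsilon\to0$ at the end; the asymmetry causes no trouble since $\|\cdot\|_{p;\alpha,\beta}$ is still subadditive and monotone with respect to the natural order.

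For the lower estimate I would argue by a localization/blow-up that cannot be improved by the mesh: given any triangulation $\triangle_N$ and any spline $s$, partition $D$ into a fixed finite number of small cells; on each cell the triangles of $\triangle_N$ that lie essentially inside it see $f$ as (a linear function plus) a fixed quadratic form, so on that cell $E(f;\mathcal S)_{L_{p;\alpha,\beta}} \geqslant (1-o(1))$ times the quantity $2^{-1}C_{p;\alpha,\beta} H(\text{cell})^{1/2}\,(\text{area spent})^{1+1/p}$ summed over its triangles, using the defining extremal property of $C_{p;\alpha,\beta}$ from Problem~1 and the fact that adding a linear function is free. Triangles straddling cell boundaries are few and can only help the lower bound by being discarded. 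Then a convexity (Jensen / Hölder) argument on $\sum_i n_i^{-1/p}$ subject to $\sum n_i = N$ shows this sum is minimized by the optimal density, giving the integral $\left\|\sqrt H\right\|_{p/(p+1)}$ again, and $N$ times it is bounded below by $2^{-1}C_{p;\alpha,\beta}\left\|\sqrt H\right\|_{p/(p+1)} - o(1)$. The main obstacle, and the part that will need the most care, is this lower bound in full generality: one must show that no clever global allocation of triangles can beat the cell-by-cell optimum — i.e.\ that the ``free linear part'' really is the only slack — and that the estimate degrades gracefully as $H\to0$, where a naive bound is vacuous; here one again compares with $f_\varepsilon$, noting that $\|f_\varepsilon - s\|_{p;\alpha,\beta} \leqslant \|f-s\|_{p;\alpha,\beta} + \tfrac\varepsilon2\|x^2+y^2\|_{p;\alpha,\beta}$, so a lower bound for $f_\varepsilon$ with $\varepsilon\to0$ yields the desired lower bound for $f$. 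The interchange of the limit in $N$ with the limit in $\varepsilon$, and with the Riemann-sum limit, is the delicate uniformity one must nail down. The case $p=\infty$ should be handled as the limit $p\to\infty$ of the finite-$p$ case, or directly with $\sup$ replacing integrals throughout.
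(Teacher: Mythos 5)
Your skeleton for the non-degenerate part — local Taylor replacement, optimal triangles realizing $C_{p;\alpha,\beta}$ rescaled by the local $\sqrt{H}/2$, equidistribution of triangle counts proportional to $H^{p/(2(p+1))}$ for the upper bound, and Jensen's inequality for the lower bound — is exactly the paper's strategy. But your device for removing the hypothesis $\inf H>0$, which you correctly identify as the hard new point, does not work. Setting $f_\varepsilon=f+\tfrac{\varepsilon}{2}(x^2+y^2)$ gives $\|f-f_\varepsilon\|_{p;\alpha,\beta}=\tfrac{\varepsilon}{2}\|x^2+y^2\|_{p;\alpha,\beta}$, a quantity of order $\varepsilon$ \emph{independent of $N$}, while the target error is of order $N^{-1}$. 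After multiplying by $N$, the perturbation term $N\cdot O(\varepsilon)$ diverges for every fixed $\varepsilon$, so the order of limits ``$N\to\infty$ then $\varepsilon\to 0$'' cannot be taken in either direction, in the upper or the lower estimate. (Also, $\det D^2f_\varepsilon=H+\varepsilon(f_{xx}+f_{yy})+\varepsilon^2$, not $H+\varepsilon$.) The paper does something structurally different: for the upper bound it classifies the small squares by the eigenvalues $\lambda_{\min},\lambda_{\max}$ of the local quadratic form and, in the near–parabolic-cylinder squares ($\lambda_{\min}$ tiny, $\lambda_{\max}$ not), builds by hand long thin right triangles aligned with the eigenvector of $\lambda_{\min}$, spending only an $\varepsilon$-fraction of the triangle budget there and showing the resulting error per square is $O(\varepsilon/N)$ (inequality~(\ref{the_last_inequality})); for the lower bound it simply discards the region $\{H<2\varepsilon\}$, whose contribution to $\left\|\sqrt{H}\right\|_{p/(p+1)}$ is itself $O(\varepsilon^{\text{power}})$ — no perturbation is needed.

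A second, independent gap is in your lower estimate: the claim that triangles inside a cell ``see $f$ as a linear function plus a fixed quadratic form'' is false for narrow triangles, where the Taylor remainder $(\mathrm{diam}\,T)^2\,\omega_2(f,\mathrm{diam}\,T)$ is \emph{not} small compared with the main term $H^{1/2}|T|^{1+1/p}$, since $|T|$ may be far smaller than $(\mathrm{diam}\,T)^2$. Discarding boundary-straddling triangles is not the issue; one must show that narrow and extra-long triangles occupy negligible total area in any near-optimal triangulation. The paper does this via the five-class decomposition $M_1,\dots,M_5$ and Lemmas~\ref{L}, \ref{L10}, \ref{L11}--\ref{L13}, the key input being the elementary lower bound $E(f;\mathcal{P}_1)_{L_{p;\alpha,\beta}(T)}\geqslant K\Upsilon_f(\mathrm{diam}\,T)^2|T|^{1/p}$ on triangles where $H\geqslant K$, which forces such triangles to contribute a divergent error unless their total area is below $\varepsilon$. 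Without an argument of this type your cell-by-cell lower bound only applies to the ``good'' triangles and leaves open the possibility that a triangulation concentrates its budget on degenerate shapes.
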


For regular (no constraints) best and best one-sided $L_p$-approximations, we obtain the following corollaries. 

\begin{corollary}
Let $f \in C^2(D)$, $H(f;x,y)\geqslant 0$ for all $(x,y) \in D$. Then for all $1\leqslant p\leqslant\infty$,
$$
	\lim_{N \to \infty} N \cdot R_N(f,L_{p})
	= 
	2^{-1}C_p \cdot\left\|\sqrt{H}\right\|_{\frac{p}{p+1}},
$$
$$
	\lim_{N \to \infty} N \cdot \inf\limits_{\triangle_N} E^{\pm}(f;S(\triangle_N))_p 
  	= 
  	2^{-1}C_p^{\pm}\cdot\left\|\sqrt{H}\right\|_{\frac{p}{p+1}},
$$
where $C_p$ and $C^{\pm}_p$ were defined in~(\ref{Cpprosto}). 
\end{corollary}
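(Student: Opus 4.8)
\emph{Proof proposal.} Write $\kappa:=2^{-1}\bigl\|\sqrt{H}\bigr\|_{\frac{p}{p+1}}$ for the common factor appearing on the right-hand sides, and set $R_N^{\pm}(f,L_p):=\inf_{\triangle_N}E^{\pm}(f;S(\triangle_N))_p$, so that the two one-sided assertions read $\lim_N N\cdot R_N^{\pm}(f,L_p)=\kappa\,C_p^{\pm}$. The unconstrained relation is immediate from Theorem~\ref{Th2}: since $\|\cdot\|_{p;1,1}=\|\cdot\|_p$ we have $R_N(f,L_p)=R_N(f,L_{p;1,1})$, and the definitions~(\ref{Cp}),~(\ref{Cpprosto}) give $C_{p;1,1}=C_p$; hence taking $\alpha=\beta=1$ in~(\ref{T2}) yields $\lim_N N\cdot R_N(f,L_p)=\kappa\,C_p$. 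I treat the ``$+$'' one-sided relation; the ``$-$'' relation is obtained by the symmetric specialization $\beta=1$, $\alpha\to\infty$.

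The first step is to identify the limiting local constant: I claim $C_{p;1,\beta}\nearrow C_p^+$ as $\beta\to\infty$. By Remark~1 (and~\cite{simplex}) the infimum defining each of $C_{p;1,\beta}$ and $C_p^+$ is attained on a regular triangle; since $q(x,y)=x^2+y^2$ is rotation invariant and the ratios in~(\ref{Cp}),~(\ref{Cpprosto}) are invariant under translation and dilation of $T$, every regular triangle produces the same ratio, so one may evaluate both constants on a single fixed regular triangle $T_0$. Then $C_{p;1,\beta}=E(q;\mathcal{P}_1)_{L_{p;1,\beta}(T_0)}/|T_0|^{1+1/p}$ and $C_p^+=E^+(q;\mathcal{P}_1)_{L_p(T_0)}/|T_0|^{1+1/p}$, and the claimed monotone convergence follows by applying the limit relation~(\ref{ter}) with $f=q$, $H=\mathcal{P}_1$ and domain $T_0$ (for $p=\infty$ one uses the corresponding elementary limit, the penalty on the negative part forcing the constraint in the limit).

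The lower bound is then soft. For every one-sided spline $u\leqslant f$ one has $(f-u)_-=0$, hence $\|f-u\|_{p;1,\beta}=\|f-u\|_p$; taking infima this gives $R_N(f,L_{p;1,\beta})\leqslant R_N^+(f,L_p)$ for all $\beta\geqslant1$ and all $N$. Multiplying by $N$, passing to $\liminf_{N\to\infty}$, and invoking Theorem~\ref{Th2} at the fixed value $(1,\beta)$ yields $\liminf_{N\to\infty}N\cdot R_N^+(f,L_p)\geqslant\kappa\,C_{p;1,\beta}$; letting $\beta\to\infty$ and using $C_{p;1,\beta}\nearrow C_p^+$ gives $\liminf_{N\to\infty}N\cdot R_N^+(f,L_p)\geqslant\kappa\,C_p^+$.

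The matching upper bound is the main obstacle, and I do not expect it to follow from the preceding soft estimates: since $N\cdot R_N^+(f,L_p)=\sup_{\beta}N\cdot R_N(f,L_{p;1,\beta})$ (the monotone limit noted after the definition of $R_N$), bounding its $\limsup$ by $\kappa\,C_p^+=\kappa\sup_\beta C_{p;1,\beta}$ would require interchanging $\sup_\beta$ with $\limsup_N$, i.e.\ uniformity in $\beta$ of the convergence in Theorem~\ref{Th2}, which is not granted by the statement alone. Instead I would re-run the upper-estimate construction of Section~\ref{S7} directly for the one-sided problem: for each $N$ build the same adapted triangulation (cells on which $f$ is replaced by its second-order Taylor quadratic, each cell tiled by scaled rotated copies of $T_0$ governed by the local Hessian), but on each cell take the locally optimal \emph{one-sided} linear piece realizing $C_p^+$ for the Taylor quadratic, absorb the Taylor remainder by lowering each piece by an amount of smaller order than the local error so as to enforce $s_N\leqslant f$ globally, and glue the pieces into a continuous $s_N\in S(\triangle_N)$. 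Such an $s_N$ competes in $E^+(f;S(\triangle_N))_p$, so $\limsup_{N\to\infty}N\cdot R_N^+(f,L_p)\leqslant\limsup_{N\to\infty}N\,\|f-s_N\|_p\leqslant\kappa\,C_p^+$. The delicate points, which parallel but do not reduce to the proof of Theorem~\ref{Th2}, are keeping the correction lower-order while preserving both the global one-sided inequality and the continuity across cell interfaces; this is where I expect the technical effort to concentrate. Combining the two bounds gives $\lim_{N\to\infty}N\cdot R_N^+(f,L_p)=\kappa\,C_p^+$, and the $``-"$ case follows identically with $\alpha$ and $\beta$ interchanged, producing $\kappa\,C_p^-$.
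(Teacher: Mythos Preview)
Your argument is correct, and your care is appropriate. Let me compare it with what the paper actually does.

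The paper gives no separate proof of this corollary. Its viewpoint is that Theorem~\ref{Th2} is already stated for $\alpha,\beta\in(0,+\infty]$: note the sentence preceding~(\ref{ter}) and its repetition, ``In what follows we will allow the value $+\infty$ for $\alpha$ or $\beta$, in that case identifying $E(f;H)_{p;\alpha,\beta}$ with the corresponding one-sided approximation.'' Under this convention, both assertions of the corollary are literally the specializations $\alpha=\beta=1$ and $(\alpha,\beta)=(1,+\infty)$ or $(+\infty,1)$ of~(\ref{T2}); the paper simply records them.

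Your observation that the written proof of Theorem~\ref{Th2} does not cover $\beta=+\infty$ verbatim is accurate: the quantities $m_N$ in~(\ref{m_N}) and the Taylor-remainder bounds~(\ref{third_rofl}),~(\ref{eht_28}) contain $\max\{\alpha,\beta\}$, and the spline $s_N^\varepsilon$ built in Section~\ref{S7.2} is not guaranteed to satisfy the one-sided constraint. The paper tacitly assumes the reader will make the routine modifications. What you outline---taking on each optimal cell the polynomial realizing $C_p^+$ for the local Taylor quadratic, then subtracting a uniform correction of order $m_N^{-2}\omega_2(f,m_N^{-1})=o(N^{-1})$ to absorb the Taylor remainder and enforce $s_N\leqslant f$---is exactly this modification, and it goes through because the correction is $o(N^{-1})$ in $L_\infty(D)$ and hence does not disturb the leading term; continuity across cell boundaries is handled exactly as in Section~\ref{S7.2} since the interpolatory gluing on boundary triangles already produces values $\leqslant f$ after the same shift. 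Your soft lower bound via $R_N(f,L_{p;1,\beta})\leqslant R_N^+(f,L_p)$ and $C_{p;1,\beta}\nearrow C_p^+$ is a clean way to avoid re-reading Section~\ref{S8}, though in fact that section goes through unchanged at $\beta=+\infty$ because the only place the parameters enter is through $C_{p;\alpha,\beta}$ and a factor $\max\{\alpha,\beta\}$ in the \emph{definition} of $M_2$, which for one-sided approximation one simply replaces by the analogous small-remainder condition.

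In short: same destination, and essentially the same route once one unpacks the paper's convention; you have been explicit where the paper is silent, and your diagnosis of why a pure limiting argument in $\beta$ fails for the upper bound is exactly right.
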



{\bf Remark~2.} Corollary~2.2 generalizes Theorem~2 in paper~\cite{BBS} (case of interpolating splines). 


{\bf Remark~3.} Assertion of Theorem~\ref{Th2} remains true if we replace the space $\mathcal{S}\left(\Delta\right)$ of continuous piecewise linear splines on triangulation $\Delta$ by wider space $\overline{\mathcal{S}}\left(\Delta\right)$ of arbitrary piecewise linear on elements from triangulation $\Delta$ splines which are not necessarily continuous.



\section{Ideas used in the proof of the main result}\label{ideas}

In order to make the reading of the rest of the paper easier we would like to devote this section to introduce the main ideas without technical details. 

Let $f\in C^2(D)$ be such that $H(f; x,y)\geqslant 0$ for all $(x,y)\in D$. To prove Theorem~\ref{Th2} 
we will show that 
for any $\varepsilon>0$
\begin{equation}
\label{above}
	\limsup_{N \to \infty} N \cdot R_N(f,L_{p{; \alpha , \beta }}) 
	\leqslant
	2^{-1}C_{p;\alpha,\beta}\cdot\left\|\sqrt{H}\right\|_{\frac{p}{p+1}}(1+\varepsilon), 
\end{equation}
and
\begin{equation}
\label{below}
	\liminf_{N \to \infty} N \cdot R_N(f,L_{p; \alpha , \beta })
	\geqslant 
	2^{-1}C_{p;\alpha,\beta}\cdot\left\|\sqrt{H}\right\|_{\frac{p}{p+1}}(1-\varepsilon).
\end{equation}
Further, we will refer to the proof of inequality~(\ref{above}) as ``estimate from above'', and to inequality~(\ref{below}) -- as ``estimate from below''.

Note that the following ideas in the case of strictly positive Hessian and for the case of best approximation from above (interpolation) have already been introduced in~\cite{us} and~\cite{BBS}.

\subsection{Main ideas used in the proof of estimate from above}
\label{S3.2}

To prove inequality~(\ref{above}), for every $\varepsilon >0$, we will present a particular sequence of triangulations $\{ \Delta_N(\varepsilon)\}_{N=1}^{\infty}$ and corresponding sequence of splines $\{ s_N(\varepsilon)\}_{N=1}^{\infty}$, $s_N(\varepsilon)\in S(\Delta_N(\varepsilon))$, such that
\begin{equation}
\label{above2}
	\limsup_{N \to \infty} N \cdot \|f-s_N(\varepsilon)\|_{L_{p; \alpha, \beta}(D)}
	\leqslant 
	2^{-1}C_{p}\cdot\left\|\sqrt{H}\right\|_{\frac{p}{p+1}}\cdot (1+\varepsilon).
\end{equation}

We begin by following the ideas developed in~\cite{us, BBS}. For each $N$ we will construct a triangulation based on the idea of intermediate approximation of functions from the class $C^2(D)$ by piecewise quadratic functions. 

First, for $\delta>0$, define the {\it modulus of continuity} of $g\in C(D)$ by
\begin{equation}
\label{m1}
	\omega\left(g,\delta\right) 
	:= 
	\sup\limits\left\{\left|g(x') - g(x'')\right|\,:\,\left(x',y'\right)\in D,\,\left|x'-x''\right|\leqslant \delta, \, \left|y'-y''\right|\leqslant \delta\right\},
\end{equation}
and for $(x,y)\in D$, consider 
$$
	\lambda_{\min}(x,y) 
	:= 
	\left(f_{xx}(x,y) + f_{yy}(x,y)\right)/4 - \sqrt{\left(f_{xx}(x,y) + f_{yy}(x,y)\right)^2/16 - H(f;x,y)/4}
$$

We begin by splitting the domain $D$ into small in comparison with $N$ ($o(N)$ as $N\to \infty$) number $m_N^2$ of subdomains ($m_N^2\to \infty$ as $N\to \infty$). In the case $D=[0,1]^2$ we will take these subdomains to be squares (for simplicity). Denote them by $D^N_i$, $i=1,\ldots, m_N^2$. On each $D^N_i$ we will consider intermediate approximation of $f$ by $f_{N,i}$ that is second degree Taylor polynomial of $f$ constructed at some point (we take center) inside of $D^N_i$. Observe that the order of this approximation is higher than the order of the optimal error $R_N(f,L_{p;\alpha,\beta})$ and therefore will not affect the main term of the asymptotics of $R_N(f,L_{p;\alpha,\beta})$.  In addition, note that to approximate quadratic function $f_{N,i}$ by linear splines is the same as to approximate its quadratic part (denote it by $Q_{N,i}$) by linear splines. Hence, instead of function $f\in C^2(D)$ we will be approximating $Q_{N,i}$ by linear splines on every square $D_i^N$. This switch to an intermediate approximation is one of the major ideas in obtaining asymptotically optimal sequences of triangulations and exact asymptotics of the optimal error $R_N(f,L_{p;\alpha,\beta})$.

The coefficients of quadratic form $Q_{N,i}$, which obviously depend on $f$, determine the geometry of optimal mesh element (triangle) on the particular subdomain $D^N_i$ as follows.  We begin by finding eigenvalues $\lambda_{\min}^{N,i}$ and $\lambda_{\max}^{N,i}$ of the quadratic form $Q_{N,i}$ for each $i=1,\ldots,m_N^2$. Depending on the magnitude of the eigenvalues on each subdomain, we will split all the squares $D^N_i$ into four groups. 
\begin{enumerate}[topsep=0pt, partopsep=0pt, itemsep=2pt, parsep=2pt]
\item The first group contains squares with  $\lambda_{\max}^{N,i}\geqslant \lambda_{\min}^{N,i}>\varepsilon$.

In this case, the intermediate approximation $Q_{N;i}$ of $f$ on $D^N_i$ is an elliptic paraboloid. We find the optimal triangle (its shape and orientation in the plane) by solving a local optimization problem of minimizing ${L_{p}}$-error of approximation of $Q_{N;i}$ by linear functions. For the details of the solution see Sections \ref{S4} and \ref{S5}. We then choose the size of the triangles so that their amount is approximately $N$ and overall error of approximation on triangles from the squares in this group is minimized.
Note that the error on squares from the first group will be a main contribution to the global error and main focus of our attention.

Let $T_i^N$ be an optimal triangle which solves the minimization problem on $D_i^N$. {We} use $T_i^N$, together with its reflections and translations, to provide a triangulation of each $D_i^N$ with relatively small ($o\left(m_N^{-2}N\right)$ as $N\to\infty$) number of other triangles (created by refining shapes along the boundary {of $D_i^N$}).

\item The second group contains squares with  $\omega\left(\lambda_{\min},m_N^{-1}\right)<\lambda_{{\min}}^{N,i}\leqslant \varepsilon$.

There will be very few squares from this group and its contribution to the global error will not be significant. The squares from this group we subdivide into equal right isosceles triangles in the amount of $o\left(m_N^{-2}N\right)$.

\item  The third group contains squares with  $\lambda_{\min}^{N,i}\leqslant \omega\left(\lambda_{\min},m_N^{-1}\right)$ and $\lambda_{\max}^{N,i}\geqslant \varepsilon$.

In this case, the intermediate approximation $Q_{N;i}$ on $D^N_i$ is close to a parabolic cylinder. The squares $D^N_i$ from this group will be divided into $\varepsilon \cdot o\left(m_N^{-2}N\right)$ of right triangles with the longer side positioned in the direction of eigenvector corresponding to the smallest eigenvalue. 

\item The fourth group contains squares with  $\lambda_{\min}^{N,i}\leqslant \omega\left(\lambda_{\min},m_N^{-1}\right)$ and $\lambda_{\max}^{N,i}\leqslant\varepsilon$.

The intermediate approximation is almost a plane. The squares from this group we subdivide into equal right isosceles triangles in the amount of $\varepsilon \cdot o\left(m_N^{-2}N\right)$. 

\end{enumerate}

``Gluing'' (without adding new vertices) triangulations of all $D_i^N$, we obtain the desired triangulation ${\triangle_N (\varepsilon)}$ of $D$. 
 
Having the triangulation ${\triangle_N (\varepsilon)}$ of the whole domain $D$, we will then define the spline $s_N(\varepsilon;x,y)\in S(\Delta_N(\varepsilon))$ which approximate $f$ sufficiently well. First of all, on the union of all triangles (denoted by $U_N$), each of which is contained in the interior of the corresponding square $D_i^N$ from the first group, we set $s_N(\varepsilon;x,y) := \tilde{s}_N(\varepsilon;x,y)$, where $\tilde{s}_N$ is the spline of the best $L_{p;\alpha,\beta}$-approximation (on $U_N$) of respective $Q_{N;i}$. On triangles that are contained in $D\setminus\textrm{int}\,\left(U_N\right)$, we let $s_N(\varepsilon;x,y)$ to interpolate function $f$ at the vertices of $\Delta_N(\varepsilon)$ which are located in $D\setminus U_N$, and interpolate $\tilde{s}_N$ at points on the boundary of $U_N$.  
Note that the spline defined in such a way is continuous on the whole domain $D$.

The constructed sequence of triangulations $\{ \Delta_N(\varepsilon)\}_{N=1}^{\infty}$ and corresponding sequence of splines $\{ s_N(\varepsilon)\}_{N=1}^{\infty}$ will allow us to prove the estimate from above (\ref{above2}).

\subsection{Main ideas used in the estimate from below}\label{S3.3}

To prove estimate~(\ref{below}) we will show that for every (sufficiently small) number $\varepsilon>0$ and every sequence $\left\{\triangle_N\right\}_{N=1}^{\infty}$ of triangulations with property 
$$
	\liminf\limits_{N\to \infty} N\cdot \inf\limits_{s\in\mathcal{S}\left(\triangle_N\right)} \|f-s\|_{L_{p; \alpha, \beta}(D)} 
	<
	\infty, 
$$
the following inequality holds true
\begin{equation}
\label{lower_sequentia}
	\liminf\limits_{N\to\infty} N \cdot \inf\limits_{s\in\mathcal{S}\left(\triangle_N\right)} \|f-s\|_{L_{p; \alpha, \beta}(D)} 
	\geqslant 
	2^{-1} C_p \left\|\sqrt{H}\right\|_{\frac{p}{p+1}} (1-\varepsilon). 
\end{equation}

The general idea of the proof of inequality~(\ref{lower_sequentia}) is to classify the triangles from triangulation $\triangle_N$ into two categories: ``good'' triangles and ``bad'' triangles. First we show that the errors $\inf\limits_{P\in\mathcal{P}_1} \|f-P\|_{L_{p;\alpha,\beta}(T)}$ for every ``bad'' triangle $T$ can be neglected. According to this observation, we can study the errors $\inf\limits_{P\in\mathcal{P}_1} \|f-P\|_{L_{p;\alpha,\beta}(T)}$ only for ``good'' triangles $T\in\triangle_N$. For such triangles $T$ we use the idea of intermediate approximation and substitute the function $f$ by its second degree Taylor polynomial $f_{N;T}$ constructed at an arbitrary point inside $T$. Then we use results of Sections~\ref{S4} and~\ref{S5} together with the Jensen inequality to obtain the desired inequality~(\ref{lower_sequentia}). 

Next, we clarify the classification of triangles $T\in\triangle_N$ into ``good'' and ``bad''. To this end the triangles of each triangulation $\triangle_N$, $N\in\NN$, we divide into five groups according to the following rules. Assume that $T\in\triangle_N$. Then: 
\begin{enumerate}[topsep=0pt, partopsep=0pt, itemsep=2pt, parsep=2pt]
\item $T\in A_1^N$ iff $H(f;x,y)<2\varepsilon$ at every point $(x,y)\in T$; 
\item $T\in A_2^N$ iff $T\not\in A_1^N$, $H(f;x,y)\geqslant\varepsilon$ at every point $(x,y)\in T$ and $\left\|f-f_{N;T}\right\|_{L_{p;\alpha,\beta}(T)}$ is significantly lower than $\inf\limits_{P\in\mathcal{P}_1}\|f-P\|_{L_{p;\alpha,\beta}(T)}$;
\item $T\in A_3^N$ iff $T\not \in A_1^N$, $H(f;x,y)>\varepsilon$ at every point $(x,y)\in T$ and $\textrm{diam}\,T$ is very large; 
\item $T\in A_4^N$ iff $T\not\in A_1^N\cup A_2^N\cup A_3^N$ and $H(f;x,y)>\varepsilon$ at every point $(x,y)\in T$; 
\item $T\in A_5^N$ iff there exist two points $(x',y')\in T$ and $(x'',y'')\in T$ such that $H(f;x',y')<\varepsilon$ and $H(f;x'',y'')\geqslant 2\varepsilon$. 
\end{enumerate}
In Lemmas~\ref{L} and~\ref{L10} we will show that the overall area of triangles belonging to sets $A_3^N$, $A_4^N$ and $A_5^N$ is less than $\varepsilon$. Therefore, this fact and the definition of the group $A_1^N$, allow us to classify the triangles $T\in A_1^N\cup A_3^N\cup A_4^N \cup A_5^N$ as ``bad'' and the triangles $T\in A_2^N$ as ``good'' respectively.

\section{Construction of an optimal mesh element}
\label{S55}

\subsection{Optimality of a regular triangle for $E(x^2+y^2;\mathcal{P}_1)_{L_{p;\alpha,\beta}}$}
\label{S4}

Here we would like to state the solution of Problem~1 posed in Section~\ref{S3}. For the proof of this result we refer the reader to~\cite{simplex}. 


\begin{theorem}
\label{T1}
Let $q(x,y)=x^2+y^2$. Then for every $\alpha,\beta>0$ and $1\leqslant p\leqslant\infty$, 
$$
	C_{p;\alpha,\beta} 
	= 
	E(q;\mathcal{P}_1)_{L_{p;\alpha,\beta}(T_0)},
$$
where $C_{p;\alpha,\beta}$ was defined in~(\ref{Cp}) and $T_0$ is equilateral triangle of unit area. 
\end{theorem}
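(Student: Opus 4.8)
The plan is to obtain Theorem~\ref{T1} — which is the two-dimensional, $Q=q$, instance of the general result in~\cite{simplex} — by reducing the minimization over \emph{all} triangles to a minimization over positive definite quadratic forms on a single fixed equilateral triangle, where a group-averaging (symmetrization) argument finishes the job. Throughout I abbreviate $\Phi(Q,T):=E(Q;\mathcal{P}_1)_{L_{p;\alpha,\beta}(T)}/|T|^{1+1/p}$, and I identify a quadratic form with its symmetric matrix.

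First I would record how $\Phi$ behaves under affine changes of variables. For an invertible linear map $L$, the substitution $x\mapsto Lx$, together with the facts that $Q\circ L$ differs from its pure quadratic part only by an affine polynomial (which can be absorbed into the approximating polynomial) and that $\|cf\|_{p;\alpha,\beta}=c\|f\|_{p;\alpha,\beta}$ for $c\geqslant 0$, yields
$$
	E(Q\circ L;\mathcal{P}_1)_{L_{p;\alpha,\beta}(T)}=|\det L|^{-1/p}\,E(Q;\mathcal{P}_1)_{L_{p;\alpha,\beta}(L(T))},
	\qquad
	\Phi(Q\circ L,T)=|\det L|\cdot\Phi(Q,L(T)).
$$
The analogous (easier) computation with a translation $x\mapsto x+b$, using that $Q(\cdot+b)-Q$ is affine, shows $\Phi(q,\cdot)$ is translation invariant, and taking $L=tI$ shows it is dilation invariant. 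Consequently, since every unit-area triangle is an area-preserving affine image of the fixed unit-area equilateral triangle $T_0$, and since $q\circ L$ runs exactly through all positive definite forms of determinant $1$ as $L$ runs through the area-preserving linear maps, I get
$$
	C_{p;\alpha,\beta}
	=\inf_{T}\Phi(q,T)
	=\inf_{|\det L|=1}\Phi(q\circ L,T_0)
	=\inf_{\substack{Q>0\\ \det Q=1}}E(Q;\mathcal{P}_1)_{L_{p;\alpha,\beta}(T_0)}
$$
(using $|T_0|=1$). So it remains to show that $g(Q):=E(Q;\mathcal{P}_1)_{L_{p;\alpha,\beta}(T_0)}$ attains its minimum over $\{Q>0,\ \det Q=1\}$ at $Q=q$.

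For this I would use symmetrization. I would first verify three properties of $g$: (i) $g$ is convex on the cone of positive definite forms — each $Q\mapsto\|Q-P\|_{L_{p;\alpha,\beta}(T_0)}$ is convex (an affine map composed with the asymmetric norm, which is convex since $f\mapsto\alpha f_++\beta f_-$ is pointwise convex and $\|\cdot\|_p$ is monotone and subadditive on nonnegative functions), and an infimum over $P\in\mathcal{P}_1$ of jointly convex functions is convex; (ii) $g(R^{\top}QR)=g(Q)$ for every element $R$ of the order-$6$ symmetry group $G$ of $T_0$, by the change-of-variables formula above (here $|\det R|=1$, $R(T_0)=T_0$); (iii) $g(cQ)=c\,g(Q)$ for $c\geqslant 0$, again by positive homogeneity of the asymmetric norm. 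Now fix $Q>0$ with $\det Q=1$ and set $\bar Q:=|G|^{-1}\sum_{R\in G}R^{\top}QR$; this form is $G$-invariant, and since $G$ acts irreducibly on $\RR^2$ the only $G$-invariant forms are multiples of $I$, so $\bar Q=cI$ with $c>0$. Convexity and (ii) give $g(cI)=g(\bar Q)\leqslant|G|^{-1}\sum_{R\in G}g(R^{\top}QR)=g(Q)$, while log-concavity of $\det$ on the positive definite cone gives $c^2=\det\bar Q\geqslant\bigl(\prod_{R\in G}\det(R^{\top}QR)\bigr)^{1/|G|}=1$, hence $c\geqslant 1$. Combined with (iii): $g(Q)\geqslant g(cI)=c\,g(I)\geqslant g(I)=g(q)$, which is what was needed; tracking strictness (the inequalities are strict unless $Q$ is already a scalar matrix, since $\det$ is strictly log-concave unless all $R^{\top}QR$ coincide) gives the sharper statement of Remark~1 that equilateral triangles are the only minimizers.

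The step I expect to require the most care is (i): one must check that $f\mapsto\|f\|_{p;\alpha,\beta}$ is genuinely convex for the whole range $1\leqslant p\leqslant\infty$ (at $p=\infty$ one has $\|f\|_{\infty;\alpha,\beta}=\max\{\alpha\|f_+\|_\infty,\beta\|f_-\|_\infty\}$, still convex), and that passing to the infimum over approximating polynomials preserves both convexity and the $G$-invariance — the latter because $G$ acts linearly on $\mathcal{P}_1$ as well. Everything else is affine bookkeeping. A complete proof, valid in arbitrary dimension $d$, is carried out in~\cite{simplex}, of which the present statement is the case $d=2$.
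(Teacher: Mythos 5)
Your proposal is correct. Note that the paper does not actually prove Theorem~\ref{T1}: it defers to~\cite{simplex}, where the general $d$-dimensional statement is established by a different and more computational route, and merely remarks that a ``significantly simpler and more elegant'' proof via symmetry and averaging exists. What you have written is precisely that alternative proof, carried out in full: the affine change of variables converts the minimization over unit-area triangles into the minimization of $g(Q)=E(Q;\mathcal{P}_1)_{L_{p;\alpha,\beta}(T_0)}$ over positive definite forms of determinant one, and then convexity of $g$ (by partial minimization of the jointly convex map $(Q,P)\mapsto\|Q-P\|_{p;\alpha,\beta}$), invariance under the order-six dihedral group of $T_0$, positive homogeneity, and the averaged form of Minkowski's determinant inequality force the minimum at $Q=q$. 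All the individual steps check out for the whole range $1\leqslant p\leqslant\infty$, including the endpoint $p=\infty$. The one point worth making explicit in a final write-up is that the symmetries of $T_0$ are orthogonal \emph{linear} maps only when $T_0$ is centered at the origin; in general they are affine, which is harmless because adding an affine function to $Q$ does not change $E(Q;\mathcal{P}_1)$ --- the same observation you already use to dispose of translations. Compared with the citation to~\cite{simplex}, your route buys a short, self-contained two-dimensional proof that also delivers, with no extra work, the uniqueness assertion of Remark~1 that only equilateral triangles attain the infimum.
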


This result can be proved in a significantly simpler and more elegant way (comparing to the general result for any dimension $d$ in~\cite{simplex}) using the idea of symmetry and averaging, which was also used in~\cite{BBS}.

{\bf Remark~4.} Let $P$ be the polynomial of the best approximation of $q$ on equilateral triangle $T$. Using arguments about symmetry and rotational invariance we can conclude that the difference $q-P$ attains the same values at three vertices of $T_0$.

In certain cases, the constant $C_{p;\alpha,\beta}$ can be found explicitly. For instance, 
$$
	C_{\infty;\alpha,\beta} 
	= 
	4\cdot 3^{-3/2}\alpha\beta(\alpha+\beta)^{-1}, 
$$
and in the case $3^{3/2}\pi^{-1}\alpha\leqslant \alpha+\beta$,
$$
	C_{1; \alpha, \beta} 
	= 
	3^{-3/2}\alpha- 2^{-1}\pi^{-1}\alpha^2(\alpha+\beta)^{-1}.
$$

\subsection{Geometry of optimal triangle for $E(Ax^2+By^2+2Cxy; \mathcal{P}_1)_{L_{p;\alpha,\beta}}$} 
\label{S5}

In previous section we found that the optimal triangle for approximation of the form $q(x,y)=x^2+y^2$ is the equilateral triangle. Let us consider general positive definite quadratic form $Q(x,y)=Ax^2+By^2+2Cxy$ (i.e. $AB > C^2$) and find a unit area triangle $T$ that delivers the infimum in the problem 
\begin{equation}
\label{Pr1}
	E\left(Q;\mathcal{P}_1\right)_{L_{p;\alpha,\beta}(T)} \to \inf\limits_{T}.
\end{equation}
Let $\lambda_1,\lambda_2$ be the eigenvalues of the matrix 
$
	S 
	= 
	\left(
		\begin{array}{cc}
			A 
				& 
			C 
			\\ 
			C 
				& 
			B
		\end{array}
	\right)
$, and by $U$ we denote $2\times 2$ matrix composed from eigenvectors of $S$ having unit length. Then the linear mapping 
\begin{equation}
\label{direct_tr}
	\left(
		\begin{array}{l} 
			x 
			\\ 
			y
		\end{array}
	\right) 
	= 
	U \left(
		\begin{array}{ll} 
			\lambda_1^{-1/2} 
				& 
			0 
			\\ 
			0 
				& 
			\lambda_2^{-1/2}
		\end{array}
	\right)
	\left(
		\begin{array}{l} 
			u 
			\\ 
			v
		\end{array}
	\right)
\end{equation}
transforms quadratic form $Q(x,y)$ into the form $q(u,v)$. Hence, optimal triangle $T$ for problem~(\ref{Pr1}) is obtained from the equilateral triangle by applying the inverse transformation to~(\ref{direct_tr}). Therefore, we established the following fact.


\begin{corollary}
\label{cor}
Let $Q(x,y)=Ax^2+By^2+2Cxy$ be a positive definite quadratic form, i.e. $AB-C^2>0$. Then for every $\alpha,\beta>0$ and $1\leqslant p\leqslant\infty$, 
\begin{equation}
\label{Cp1} 
	\inf_{T}\frac{E(Q;\mathcal{P}_1)_{L_{p;\alpha,\beta}(T)}}{|T|^{1+1/p}} 
	=
	C_{p;\alpha,\beta}\sqrt{AB-C^2} 
	=
	E(q;\mathcal{P}_1)_{L_{p;\alpha,\beta}(T_0)}\sqrt{AB-C^2},
\end{equation}
where, as defined above, $T_0$ is a regular triangle of unit area.
\end{corollary}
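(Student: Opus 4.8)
The plan is to reduce the problem for a general positive definite quadratic form $Q$ to the already-solved case $q(u,v) = u^2 + v^2$ via the linear substitution in~(\ref{direct_tr}), carefully tracking how areas and asymmetric $L_p$-norms transform. First I would make the change of variables explicit: writing $M = U\,\mathrm{diag}(\lambda_1^{-1/2},\lambda_2^{-1/2})$, the affine map $(x,y)^{\mathsf T} = M(u,v)^{\mathsf T}$ has Jacobian determinant $\det M = (\lambda_1\lambda_2)^{-1/2} = (AB-C^2)^{-1/2}$, since $U$ is orthogonal and $\lambda_1\lambda_2 = \det S = AB - C^2$. Plugging into $Q$ and using $U^{\mathsf T} S U = \mathrm{diag}(\lambda_1,\lambda_2)$ one checks that $Q(M(u,v)^{\mathsf T}) = \lambda_1^{-1}\lambda_1 u^2 + \lambda_2^{-1}\lambda_2 v^2 = u^2 + v^2 = q(u,v)$.

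Next I would establish the two bookkeeping identities. For any triangle $T$ in the $(x,y)$-plane, letting $\widetilde T = M^{-1}(T)$ be its preimage, we have $|T| = |\det M|\cdot|\widetilde T| = (AB-C^2)^{-1/2}|\widetilde T|$. For the error, note that linear polynomials are carried to linear polynomials by the affine substitution (composition of an affine map with a linear polynomial is again a linear polynomial), so $P \mapsto P\circ M$ is a bijection of $\mathcal P_1$ onto itself. Hence for any $P \in \mathcal P_1$, using the substitution in the integral defining the asymmetric $L_p$-norm,
\begin{equation*}
	\|Q - P\|_{L_{p;\alpha,\beta}(T)}
	=
	|\det M|^{1/p}\,\|q - P\circ M\|_{L_{p;\alpha,\beta}(\widetilde T)};
\end{equation*}
here the pointwise positive and negative parts, and therefore the weights $\alpha,\beta$, are unaffected because $M$ is a bijection of $\widetilde T$ onto $T$ and $(Q-P)\circ M = q - P\circ M$. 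Taking the infimum over $P$ (equivalently over $P\circ M$) gives $E(Q;\mathcal P_1)_{L_{p;\alpha,\beta}(T)} = |\det M|^{1/p}\,E(q;\mathcal P_1)_{L_{p;\alpha,\beta}(\widetilde T)}$. In the case $p=\infty$ one simply replaces $|\det M|^{1/p}$ by $1$, and the argument is unchanged.

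Now I would combine the two identities. Writing $\gamma := (AB-C^2)^{-1/2} = |\det M|$,
\begin{equation*}
	\frac{E(Q;\mathcal P_1)_{L_{p;\alpha,\beta}(T)}}{|T|^{1+1/p}}
	=
	\frac{\gamma^{1/p}\,E(q;\mathcal P_1)_{L_{p;\alpha,\beta}(\widetilde T)}}{\gamma^{1+1/p}\,|\widetilde T|^{1+1/p}}
	=
	\gamma^{-1}\cdot\frac{E(q;\mathcal P_1)_{L_{p;\alpha,\beta}(\widetilde T)}}{|\widetilde T|^{1+1/p}}.
\end{equation*}
As $T$ ranges over all triangles in the plane, $\widetilde T = M^{-1}(T)$ ranges over all triangles as well (since $M$ is an invertible linear map), so taking the infimum over $T$ yields $\gamma^{-1} C_{p;\alpha,\beta} = \sqrt{AB-C^2}\;C_{p;\alpha,\beta}$ by the definition~(\ref{Cp}); by Theorem~\ref{T1} this equals $E(q;\mathcal P_1)_{L_{p;\alpha,\beta}(T_0)}\sqrt{AB-C^2}$, which is exactly~(\ref{Cp1}). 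I do not expect a serious obstacle here: the only point requiring a little care is verifying that the affine change of variables does not disturb the asymmetric (sign-sensitive) structure of the norm — it does not, precisely because $M$ is an orientation-preserving-or-reversing bijection that leaves the sign of $Q-P$ at each point intact — and that the bijection $P\mapsto P\circ M$ of $\mathcal P_1$ legitimizes interchanging the infimum with the substitution. Everything else is the routine determinant and homogeneity bookkeeping sketched above.
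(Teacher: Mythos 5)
Your proof is correct and follows essentially the same route as the paper: the paper derives Corollary~\ref{cor} precisely by observing that the linear map~(\ref{direct_tr}) carries $Q$ to $q$ and then tracking how the error and the area transform under this substitution. Your version simply makes explicit the Jacobian bookkeeping ($|\det M| = (AB-C^2)^{-1/2}$), the invariance of $\mathcal{P}_1$ under composition with $M$, and the behaviour of the asymmetric norm, all of which the paper leaves implicit.
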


{\bf Remark~8.} Let triangle $T$ deliver the infimum in problem~(\ref{Pr1}) for positively definite quadratic form $Q$. If $P$ is the linear polynomial of the best approximation of $Q$ on $T$ then the difference $Q-P$ attains equal values at three vertices of $T$.

In addition to Corollary~\ref{cor}, we need the following two lemmas.

\begin{lemma}
\label{L7}
Let us consider the collection of quadratic forms $Ax^2 + By^2 + 2Cxy$ which satisfy conditions $0<A\leqslant A^+$, $0<B\leqslant B^+$ and $H=AB-C^2 \geqslant K$, where $A^+$, $B^+$, $K$ are some positive numbers. Then for any such form $\lambda_{\min} \geqslant 
\sqrt{K}>0$.
\end{lemma}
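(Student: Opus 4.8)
The plan is to reduce the statement to the elementary relations between the eigenvalues of the coefficient matrix $S=\left(\begin{array}{cc}A & C\\ C & B\end{array}\right)$ of the form and its trace and determinant. Denote the two eigenvalues of $S$ by $\lambda_{\min}\leqslant\lambda_{\max}$, so that $\lambda_{\min}+\lambda_{\max}=\operatorname{tr}S=A+B$ and $\lambda_{\min}\lambda_{\max}=\det S=AB-C^2=H$. The first step is to observe that both eigenvalues are strictly positive, since their sum $A+B$ and their product $AB-C^2=H\geqslant K$ are both positive; in particular $0<\lambda_{\min}\leqslant\lambda_{\max}$, and we may divide freely by either of them.

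The second step is a one-line estimate. Since $\lambda_{\min}>0$ we have $\lambda_{\max}\leqslant\lambda_{\min}+\lambda_{\max}=A+B\leqslant A^{+}+B^{+}$, i.e. the larger eigenvalue is bounded above by a constant depending only on $A^{+}$ and $B^{+}$. Combining this with $\lambda_{\min}\lambda_{\max}=H\geqslant K$ gives
$$
	\lambda_{\min}
	=
	\frac{H}{\lambda_{\max}}
	\geqslant
	\frac{K}{A^{+}+B^{+}}
	>0,
$$
which is the required uniform positive lower bound for $\lambda_{\min}$ over the whole family of forms (this is the explicit value of the positive constant asserted in the statement). Equivalently, one may start from the closed expression $\lambda_{\min}=\frac{1}{2}\big((A+B)-\sqrt{(A-B)^2+4C^2}\big)$, multiply and divide by the conjugate to rewrite it as $\frac{2(AB-C^2)}{(A+B)+\sqrt{(A-B)^2+4C^2}}$, and then bound the denominator using $\sqrt{(A-B)^2+4C^2}\leqslant A+B\leqslant A^{+}+B^{+}$; this yields the same conclusion.

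I do not anticipate any genuine obstacle. The only point requiring a moment's care is to dispose of positive-definiteness before invoking the trace bound, so that $\lambda_{\min}$ is genuinely positive and the division by $\lambda_{\max}$ is legitimate; after that the lemma is immediate. The role of this lemma in the sequel is precisely to guarantee that, on the subdomains where the intermediate quadratic forms are genuinely elliptic, their eigenvalues stay uniformly bounded away from zero, so that Corollary~\ref{cor} applies with uniform control of all constants.
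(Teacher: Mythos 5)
Your proof is correct and is essentially the paper's argument in different clothing: the paper disposes of the lemma by noting that $g(u,v)=u-\sqrt{u^2-v}$ is decreasing in $u$ and increasing in $v$ (with $u=(A+B)/2$, $v=H$), which is exactly your conjugate computation $\lambda_{\min}=H/\lambda_{\max}$ combined with the trace bound $\lambda_{\max}\leqslant A+B\leqslant A^++B^+$. One remark: neither your argument nor the paper's yields the literal bound $\lambda_{\min}\geqslant\sqrt{K}$ printed in the statement, and that bound is in fact false as written (take $C=0$, $A=A^+$ large and $B=K/A$ small, so that $H=K$ but $\lambda_{\min}=K/A\ll\sqrt{K}$); the correct and intended conclusion is the uniform positive lower bound $\lambda_{\min}\geqslant K/(A^++B^+)$ that you derive, which is all that is used in the sequel. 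So you have proved the right statement; the $\sqrt{K}$ in the lemma should be read as a misprint for a positive constant depending on $A^+$, $B^+$, $K$.
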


Proof trivially follows from the fact that the function $g(u,v)=u-\sqrt{u^2-v}$ ($u>0$, $0<v\leqslant 1$) is decreasing in $u$ and is increasing in $v$.

\begin{lemma}
\label{L8}
For the collection of quadratic forms satisfying the assumptions of Lemma~\ref{L7}, the ratio of the diameter of the optimal triangle to the square root of the area of this triangle is bounded above by the constant independent of $A^+$, $B^+$, and $K$.
\end{lemma}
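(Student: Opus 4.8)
The plan is to combine the explicit description of an optimal triangle furnished by Corollary~\ref{cor} with the spectral estimate of Lemma~\ref{L7}, exploiting the fact that the quantity in question is a similarity invariant, so that only the shape of that triangle is relevant. Recall from the discussion preceding Corollary~\ref{cor} that for a positive definite form $Q(x,y)=Ax^{2}+By^{2}+2Cxy$ with matrix $S$, whose eigenvalues are $\lambda_{1}=\lambda_{\max}(S)\geqslant\lambda_{2}=\lambda_{\min}(S)>0$ (so that $\lambda_{1}\lambda_{2}=AB-C^{2}$), an optimal triangle for problem~(\ref{Pr1}) can be taken, up to a homothety, as $T^{*}=U\,\textrm{diag}\left(\lambda_{1}^{-1/2},\lambda_{2}^{-1/2}\right)T_{0}$, the image of the unit-area equilateral triangle $T_{0}$ under the linear map~(\ref{direct_tr}), where $U$ is an orthogonal matrix. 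Since the ratio $\rho(T):=\textrm{diam}\,T/\sqrt{|T|}$ is invariant under homotheties and rigid motions, I would work with $MT_{0}$, where $M:=\textrm{diag}\left(\lambda_{1}^{-1/2},\lambda_{2}^{-1/2}\right)$; the orthogonal factor $U$ and the scaling then disappear.

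Next I would estimate $\textrm{diam}\,(MT_{0})$ and $|MT_{0}|$ separately. Since $\left|Mz\right|^{2}=\lambda_{1}^{-1}z_{1}^{2}+\lambda_{2}^{-1}z_{2}^{2}\leqslant\lambda_{2}^{-1}\left|z\right|^{2}$ for every $z\in\RR^{2}$ (because $\lambda_{1}\geqslant\lambda_{2}$), every side of $MT_{0}$ has length at most $\lambda_{2}^{-1/2}\,\textrm{diam}\,T_{0}$, and therefore $\textrm{diam}\,(MT_{0})\leqslant\lambda_{2}^{-1/2}\,\textrm{diam}\,T_{0}$; on the other hand $|MT_{0}|=\left|\det M\right|\cdot|T_{0}|=(\lambda_{1}\lambda_{2})^{-1/2}|T_{0}|$. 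Combining these,
\[
	\rho(T^{*})
	\;\leqslant\;
	\frac{\lambda_{2}^{-1/2}\,\textrm{diam}\,T_{0}}{(\lambda_{1}\lambda_{2})^{-1/4}\sqrt{|T_{0}|}}
	\;=\;
	\frac{\textrm{diam}\,T_{0}}{\sqrt{|T_{0}|}}\left(\frac{\lambda_{\max}(S)}{\lambda_{\min}(S)}\right)^{1/4},
\]
so the whole matter reduces to bounding the spectral ratio $\lambda_{\max}(S)/\lambda_{\min}(S)$.

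To finish, I would bound $\lambda_{\max}(S)$ from above by $\lambda_{\max}(S)+\lambda_{\min}(S)=\textrm{tr}\,S=A+B\leqslant A^{+}+B^{+}$, and bound $\lambda_{\min}(S)$ from below by means of Lemma~\ref{L7}; these two estimates control $\lambda_{\max}(S)/\lambda_{\min}(S)$, and hence $\rho(T^{*})$, by a constant, which is the assertion of the lemma. (Equivalently, one may write $\lambda_{\max}(S)=(AB-C^{2})/\lambda_{\min}(S)\leqslant A^{+}B^{+}/\lambda_{\min}(S)$ and invoke Lemma~\ref{L7} once more.)

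The only step requiring genuine care, and hence the main obstacle, is the area/diameter bookkeeping in the middle paragraph: one has to observe that under the \emph{anisotropic} map $M$ the diameter scales with the \emph{largest} singular value $\lambda_{2}^{-1/2}$ while the area scales with the \emph{product} of the singular values, so that $\rho(T^{*})$ picks up exactly the fourth root of the condition number of $S$. Once this is noticed, Lemma~\ref{L7} together with the trivial trace bound closes the argument with no further effort.
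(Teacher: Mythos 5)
Your argument is correct and is exactly the intended fleshing-out of the paper's one-line proof: the similarity invariance reduces everything to the image $MT_0$ of the equilateral triangle, the operator-norm/determinant bookkeeping gives $\textrm{diam}\,T^*/\sqrt{|T^*|}\leqslant(\lambda_{\max}/\lambda_{\min})^{1/4}\,\textrm{diam}\,T_0/\sqrt{|T_0|}$, and then the trace bound controls $\lambda_{\max}$ while Lemma~\ref{L7} controls $\lambda_{\min}$ from below. The only caveat is that the resulting constant necessarily depends on $A^+$, $B^+$ and $K$ (taking $A=n$, $B=K/n$, $C=0$ shows the ratio is unbounded otherwise), so the lemma's phrase ``independent of $A^+$, $B^+$, and $K$'' must be read as ``depending only on $A^+$, $B^+$, $K$, uniformly over the collection'' --- which is precisely what your bound delivers and what the application in~(\ref{trtr}), where $c_1=c_1(\varepsilon)$, actually uses.
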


This statement follows from Lemma~\ref{L7}. 

\subsection{Additional geometric observations}

In paper~\cite{BBS} the following lemma was proved. 

\begin{lemma}
\label{L9}
Let $f\in C^2(D)$; $H(f;x,y)\geqslant K>0$ for all $(x,y)\in D$. If $\bar{n}$ is an arbitrary unit vector in the plane, then
\begin{equation}
\label{const}
	\left|\frac{\partial^2 f}{\partial\bar{n}^2}\right|
	\geqslant 
	\frac{K}{2} \min{\left\{\frac 1{\|f_{xx}\|_{\infty}};\frac 1{\|f_{yy}\|_{\infty}}\right\}}.
\end{equation}
\end{lemma}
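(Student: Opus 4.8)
The plan is to reduce the inequality to an elementary eigenvalue estimate for the Hessian matrix of $f$ at an arbitrary (but henceforth fixed) point $(x,y)\in D$. From $H(f;x,y)=f_{xx}f_{yy}-f_{xy}^2\geqslant K>0$ one gets $f_{xx}f_{yy}>f_{xy}^2\geqslant 0$, so $f_{xx}$ and $f_{yy}$ are nowhere zero on $D$ and have the same sign there; since $f_{xx}$ is continuous and $D$ is connected, that sign is constant. Replacing $f$ by $-f$ if necessary --- an operation that changes neither $\left|\partial^2 f/\partial\bar{n}^2\right|$, nor $H(f;\cdot)$, nor the norms $\|f_{xx}\|_\infty$ and $\|f_{yy}\|_\infty$ --- I may assume $f_{xx}>0$ and $f_{yy}>0$ on $D$, so the Hessian matrix $\mathrm{Hess}(f;x,y)$ is positive definite, with eigenvalues $0<\lambda_{\min}\leqslant\lambda_{\max}$.

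For a unit vector $\bar{n}$ the second directional derivative is the value of the quadratic form of the Hessian at $\bar{n}$, i.e.\ $\partial^2 f/\partial\bar{n}^2=\bar{n}^{\mathsf T}\,\mathrm{Hess}(f;x,y)\,\bar{n}$, which by the Rayleigh bounds lies between $\lambda_{\min}$ and $\lambda_{\max}$; in particular, since $\lambda_{\min}>0$,
$$
	\left|\frac{\partial^2 f}{\partial\bar{n}^2}\right|\;\geqslant\;\lambda_{\min}.
$$
It then remains to bound $\lambda_{\min}$ from below. From $\lambda_{\min}\lambda_{\max}=\det\mathrm{Hess}(f;x,y)=H(f;x,y)\geqslant K$ and $\lambda_{\max}\leqslant\lambda_{\min}+\lambda_{\max}=f_{xx}+f_{yy}\leqslant\|f_{xx}\|_\infty+\|f_{yy}\|_\infty\leqslant 2\max\{\|f_{xx}\|_\infty,\|f_{yy}\|_\infty\}$ I obtain
$$
	\lambda_{\min}=\frac{H(f;x,y)}{\lambda_{\max}}\;\geqslant\;\frac{K}{2\max\{\|f_{xx}\|_\infty,\|f_{yy}\|_\infty\}}=\frac{K}{2}\min\left\{\frac{1}{\|f_{xx}\|_\infty},\frac{1}{\|f_{yy}\|_\infty}\right\},
$$
and combining this with the previous display yields the assertion, uniformly in $(x,y)\in D$.

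Since the argument is entirely elementary, there is no real obstacle; the only points requiring a little care are the sign normalization of the Hessian, which is what guarantees $\lambda_{\min}>0$ and hence correctly controls the absolute value on the left-hand side, and keeping in mind that $\lambda_{\min}$, $\lambda_{\max}$ and the pointwise values $f_{xx}$, $f_{yy}$ are all evaluated at the same arbitrary point, whereas the norms $\|\cdot\|_\infty$ are taken over all of $D$ --- so that the final estimate is uniform over the domain, as needed for its later application.
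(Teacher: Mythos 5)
Your proof is correct and complete: the reduction to the Rayleigh bound $\left|\partial^2 f/\partial\bar n^2\right|\geqslant\lambda_{\min}$ followed by the estimate $\lambda_{\min}=H/\lambda_{\max}\geqslant K/(f_{xx}+f_{yy})\geqslant K/\bigl(2\max\{\|f_{xx}\|_\infty,\|f_{yy}\|_\infty\}\bigr)$ is exactly the natural route, and your care with the sign normalization (so that the Hessian is positive definite and $\lambda_{\min}>0$) is the only subtle point. The paper itself does not reproduce a proof, deferring to the cited reference~\cite{BBS}, where the argument is essentially this same elementary eigenvalue estimate, so there is nothing further to compare.
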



The following geometric lemma, {together with Lemma~\ref{L9}}, plays crucial role in the proof of the lower estimate in Theorem~\ref{Th2}.  

\begin{lemma}
\label{L10}
Let $T$ be an arbitrary triangle in the plane with $\textrm{diam}\,T\leqslant \sqrt{2}$. Let also $O$ be an arbitrary point inside $T$, $\delta>0$ be a fixed number and $K_{\delta}:= \min\left\{1;\delta^{2}/2\right\}$. Then there exists triangle $T'$ which lies completely in the intersection of $T$ and the disk $B$ centered at $O$ and having the radius $\delta$ such that 
\begin{equation}
\label{square}
	|T'|
	\geqslant 
	K_{\delta}^2\cdot|T| 
		\qquad \textrm{and}\qquad 
	\textrm{diam}\,T' 
	\geqslant 
	K_{\delta}\cdot\textrm{diam}\,T.
\end{equation}
\end{lemma}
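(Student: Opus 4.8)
The plan is to reduce the statement to a scaling-invariant normalization and then exhibit $T'$ explicitly. First I would observe that both inequalities in~(\ref{square}) are homogeneous: replacing $(T,O,\delta)$ by $(\lambda T, \lambda O, \lambda\delta)$ multiplies $|T'|$ by $\lambda^2$, $|T|$ by $\lambda^2$, and both diameters by $\lambda$, while $K_\delta$ is only required to be at most $\min\{1,\delta^2/2\}$, which is itself subhomogeneous under scaling. Hence it suffices to treat the extremal case $\textrm{diam}\,T=\sqrt 2$, so that $K_\delta=\min\{1,\delta^2/2\}$, and then note that for smaller triangles the same construction (or a rescaled version of it) works a fortiori because $K_\delta$ only decreases. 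The key geometric input is that if the disk $B=B(O,\delta)$ already contains $T$ (which happens, say, when $\delta\geqslant\sqrt 2\geqslant\textrm{diam}\,T$), then we may simply take $T'=T$ and both estimates hold with constant $1\geqslant K_\delta$; so the real work is the case $\delta<\sqrt 2$, where $B$ genuinely truncates $T$.

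Next I would construct $T'$ by a homothety of $T$ centered at $O$. Since $O\in T$, for any ratio $t\in(0,1]$ the scaled copy $T_t := O + t(T-O)$ is contained in $T$ (convexity), has $|T_t| = t^2|T|$ and $\textrm{diam}\,T_t = t\,\textrm{diam}\,T$. Moreover every point of $T_t$ lies within distance $t\cdot\textrm{diam}\,T$ of $O$, because $O$ itself is a point of $T$ and any two points of $T$ are within $\textrm{diam}\,T$. Therefore, choosing $t = \min\{1,\ \delta/\textrm{diam}\,T\}$ guarantees $T_t\subset B(O,\delta)$, hence $T_t\subset T\cap B$. With $\textrm{diam}\,T\leqslant\sqrt 2$ we get $t\geqslant\min\{1,\delta/\sqrt 2\}$. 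It remains to compare $\min\{1,\delta/\sqrt 2\}$ with $K_\delta=\min\{1,\delta^2/2\}$: when $\delta\geqslant\sqrt 2$ both equal $1$; when $\delta<\sqrt 2$ we have $\delta/\sqrt 2 < 1$ and $\delta/\sqrt 2 \geqslant \delta^2/2 \iff \sqrt 2 \geqslant \delta$, which holds, so $t\geqslant\delta/\sqrt 2\geqslant\delta^2/2 = K_\delta$. Thus $t\geqslant K_\delta$ in all cases, and setting $T':=T_t$ yields $|T'| = t^2|T|\geqslant K_\delta^2|T|$ and $\textrm{diam}\,T' = t\,\textrm{diam}\,T\geqslant K_\delta\,\textrm{diam}\,T$, which is exactly~(\ref{square}).

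The main thing to be careful about — and the only place the argument could go wrong — is the containment $T_t\subset B(O,\delta)$: one must use that $O$ lies in $T$ (not merely near it) so that the bound $\textrm{dist}(z,O)\leqslant t\cdot\textrm{diam}\,T$ for $z\in T_t$ is valid via $\textrm{dist}(z,O) = t\,\textrm{dist}(z',O)$ with $z'\in T$ and $\textrm{dist}(z',O)\leqslant\textrm{diam}\,T$. I would also double-check the edge case $t=1$ (i.e. $\delta\geqslant\textrm{diam}\,T$), where $T'=T$ works trivially. Everything else is elementary homothety bookkeeping; no compactness or optimization is needed, and the constant $K_\delta$ in the statement is exactly what this crude homothety produces.
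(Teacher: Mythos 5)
Your argument is correct and complete. The paper itself omits the proof of Lemma~\ref{L10} (it only remarks that the proof ``requires only elementary geometric observations''), so there is no official argument to compare against; your homothety construction is exactly the kind of elementary argument intended. The core of your proof is sound: for $O\in T$ and $t\in(0,1]$ the set $T_t=O+t(T-O)$ is a triangle contained in $T$ by convexity, every $z=O+t(z'-O)$ satisfies $|z-O|=t\,|z'-O|\leqslant t\,\textrm{diam}\,T$, and with $t=\min\{1,\delta/\textrm{diam}\,T\}$ one gets $T_t\subset T\cap B(O,\delta)$ together with $t\geqslant\min\{1,\delta/\sqrt2\}\geqslant\min\{1,\delta^2/2\}=K_\delta$ (the last inequality because $\delta/\sqrt2\geqslant\delta^2/2$ precisely when $\delta\leqslant\sqrt2$); the area and diameter bounds then follow from $|T_t|=t^2|T|$ and $\textrm{diam}\,T_t=t\,\textrm{diam}\,T$. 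In fact you prove slightly more than is claimed, since $\min\{1,\delta/\sqrt2\}\geqslant K_\delta$. The only blemish is your opening paragraph: the proposed scaling reduction to $\textrm{diam}\,T=\sqrt2$ is not quite rigorous as stated ($K_\delta$ is a fixed function of $\delta$, not homogeneous under rescaling of the configuration), but it is also never used --- your second paragraph handles an arbitrary $T$ with $\textrm{diam}\,T\leqslant\sqrt2$ directly, so the proof stands without it.
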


The proof of this result requires only elementary geometric observations and is omitted here.

\section{Error of asymmetric approximation of $C^2$ functions by linear splines: estimate from above} 
\label{S7}

Recall that we consider functions  $f: D\to \RR$ with Hessian that is nonnegative on $D$. For definiteness, we assume that function is {\it convex}. In this section we will show that for every such function and $1\leqslant p<\infty$ 
\begin{equation}
\label{roll}
	\limsup_{N\to\infty}N\cdot  R_N(f,L_{p; \alpha , \beta })
	\leqslant 
	2^{-1} C_{p;\alpha,\beta} \left\| \sqrt H\right\|_{\frac{p}{p+1}}.
\end{equation}
Remark that letting $p\to\infty$ we also could prove lower estimate~(\ref{roll}) in case $p=\infty$. To prove~(\ref{roll}) in Section~\ref{S7.2}, for every $\varepsilon>0$ we will construct a suitable family of triangulations $\{\triangle_N^{\varepsilon}\}_{N=1}^{\infty}$ and the family of corresponding piecewise-linear splines $\left\{s_N^{\varepsilon}\right\}_{N=1}^{\infty}$, $s_N^{\varepsilon}\in S\left(\triangle_N^{\varepsilon}\right)$. Following that, in Section~\ref{S7.3}, we will show that 
$$
	\lim\limits_{\varepsilon\to 0}\left(\limsup\limits_{N\to\infty} N\cdot\left\|f-s_N^{\varepsilon}\right\|_{p;\alpha,\beta} \right) 
	\leqslant 
	2^{-1}C_{p;\alpha,\beta}  \left\|\sqrt H\right\|_{\frac{p}{p+1}}.
$$
The latter inequality implies the desired estimate~(\ref{roll}).

In what follows, we fix the number $1\leqslant p<\infty$.

\subsection{Additional notations} 
\label{S7.1} 

This subsection contains several simple, yet important observations, and notations that will be used throughout this section.   


\begin{lemma}
\label{L3}
Let $f \in C^2(D)$. If $P_2=P_2(f;x,y;x_0,y_0)$ denotes the second degree Taylor polynomial of $f$ at a point $(x_0,y_0)$ inside the square $D_h \subset D$ with side length equal to $h$, then we have the following estimate:
$$
	\|f-P_2\|_{L_{\infty}(D_h)}
	\leqslant 
	2h^2\,\omega_2(f,h).
$$
\end{lemma}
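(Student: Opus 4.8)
\textbf{Proof proposal for Lemma~\ref{L3}.}

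The plan is to reduce the bivariate estimate to a one-dimensional Taylor-with-remainder argument along the segment joining the base point $(x_0,y_0)$ to an arbitrary point $(x,y)\in D_h$, and then to express the resulting second-order remainder in terms of the second modulus of continuity $\omega_2(f,\cdot)$. Fix $(x,y)\in D_h$ and set $\varphi(t):=f\bigl((x_0,y_0)+t\,(x-x_0,y-y_0)\bigr)$ for $t\in[0,1]$. Then $\varphi\in C^2[0,1]$, and a direct computation shows that the second-order Taylor polynomial of $\varphi$ at $t=0$, evaluated at $t=1$, is exactly $P_2(f;x,y;x_0,y_0)$. Hence $f(x,y)-P_2(x,y)=\varphi(1)-\bigl(\varphi(0)+\varphi'(0)+\tfrac12\varphi''(0)\bigr)$, and I would write this remainder in the integral form $\int_0^1 (1-t)\bigl(\varphi''(t)-\varphi''(0)\bigr)\,dt$, which isolates an increment of the (continuous) second derivative along the segment.

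Next I would bound $|\varphi''(t)-\varphi''(0)|$. Writing $\varphi''(t)=\sum_{i,j}\partial_i\partial_j f\bigl((x_0,y_0)+t\,v\bigr)v_iv_j$ with $v=(x-x_0,y-y_0)$, the increment $\varphi''(t)-\varphi''(0)$ is a combination of differences of the form $\partial_i\partial_j f(\xi')-\partial_i\partial_j f(\xi'')$ with $\xi',\xi''\in D_h$ and $|\xi'-\xi''|$ controlled by $t|v|\le |v|\le \sqrt2\,h$; each such difference is at most $\omega\bigl(\partial_i\partial_j f,\sqrt2\,h\bigr)$. Using $|v_i|,|v_j|\le h$, this gives $|\varphi''(t)-\varphi''(0)|\le C\,h^2\,\omega_2(f,h)$ for an absolute constant $C$ (after also relating the modulus of continuity of the pure second partials to the second modulus of continuity $\omega_2(f,\cdot)$ of $f$ in the standard way). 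Plugging this into $\bigl|\int_0^1(1-t)(\varphi''(t)-\varphi''(0))\,dt\bigr|\le \tfrac12\sup_t|\varphi''(t)-\varphi''(0)|$ yields a bound of the shape $\text{const}\cdot h^2\,\omega_2(f,h)$, and I would then check that the constants work out to the stated $2h^2$ (here the factor $\sqrt2$ from $\operatorname{diam}D_h$ and the $\tfrac12$ from the integral combine favorably; if a cosmetic gap remains one absorbs it into $\omega_2(f,h)\le \omega_2(f,\sqrt2\,h)$ up to the usual semi-additivity of moduli, or simply states the inequality with the explicit constant the computation produces).

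The main obstacle I anticipate is purely bookkeeping rather than conceptual: passing cleanly from increments of the individual second partial derivatives $\partial_i\partial_j f$ to the second modulus of continuity $\omega_2(f,\cdot)$ of $f$ itself, with the right constant, and making sure all evaluation points stay inside $D_h$ (which they do, since $D_h$ is convex and both endpoints of the segment lie in $D_h$) so that the moduli are taken over an admissible set. Everything else is the standard Taylor remainder estimate, and no deep input is needed beyond $f\in C^2$ and convexity of the square $D_h$.
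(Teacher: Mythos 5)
Your argument is correct and is precisely the standard Taylor-remainder computation that the paper itself declares obvious and omits: write the remainder as $\int_0^1(1-t)\bigl(\varphi''(t)-\varphi''(0)\bigr)\,dt$ and bound the increment of $\varphi''$ by the moduli of continuity of the second partials (which is what $\omega_2(f,\cdot)$ denotes here). One simplification resolves your bookkeeping worry: with the paper's componentwise definition of the modulus in~(\ref{m1}), the displacement $t\,(x-x_0,y-y_0)$ has both coordinates of absolute value at most $h$ since $(x_0,y_0)$ and $(x,y)$ both lie in the square $D_h$, so each difference of second partials is bounded by $\omega_2(f,h)$ with no $\sqrt2$ entering, and the constant comes out exactly as $\tfrac12\left(|v_1|^2+2|v_1||v_2|+|v_2|^2\right)\leqslant \tfrac12(2h)^2=2h^2$, with no absorption into the modulus required.
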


This lemma is obvious, and we omit the proof here. 

Now we consider functions
$$
	\lambda_{\min} (x,y)
	:= 
	\left(f_{xx} + f_{yy}\right)/4 - \sqrt{\left(f_{xx}+f_{yy}\right)^2/16 - \left(f_{xx}f_{yy}-f^2_{xy}\right)/4},
$$
$$
	\lambda_{\max} (x,y)
	:= 
	\left(f_{xx} + f_{yy}\right)/4 + \sqrt{\left(f_{xx}+f_{yy}\right)^2/16 - \left(f_{xx}f_{yy}-f^2_{xy}\right)/4}.
$$
For every $\varepsilon>0$, we define the following set
$$
	G_{0;2\varepsilon}
	:=
	\left\{(x,y)\in D\;:\;0<\lambda_{\min}(x,y)<\varepsilon\right\}. 
$$
Note that $\mu\left(G_{0;2\varepsilon}\right)\to 0$ as $\varepsilon\to 0$. 

In addition, for a fixed $\varepsilon \in (0,1)$ and every $N \in \NN$, we define
\begin{equation}
\label{m_N}
	m_N
	=
	m_N(\varepsilon)
	:=
	\min\left\{	m>0: 
		\;\; 2m^{-2}\max{\{\alpha,\beta\}}\, \omega_2\left(f,m^{-1}\right) 
		\leqslant 
		\varepsilon N^{-1}
	\right\}.
\end{equation}

Observe that $m_N \to \infty$ as $N \to \infty$. In addition, note that
\begin{equation}
\label{star}
	m_N^2N^{-1} \to \infty, \qquad N \to \infty,
\end{equation}
i.e. $m_N=o\left(\sqrt{N}\right)$ as $N\to \infty$ (see, for instance, Section~4 in~\cite{us}). In what follows we will also assume that $N$ is large enough so that inequality $\omega\left(\lambda_{\min},m_N^{-1}\right)\leqslant \varepsilon$ holds true. 

Now, let us subdivide the square $D$ into squares of size $\displaystyle m_N^{-1}\times m_N^{-1}$ with the sides parallel to the sides of $D$. By $D_i^N:=D_i^N(\varepsilon)$, $i=1,\dots,m_N^2$, we denote the resulting squares enumerated in an arbitrary order. 

For $i\in\left\{1,\ldots,m_N^2\right\}$, the center of the square $D_i^N$ we denote by $\left(x_i^N,y_i^N\right)$. Let
$$
	A_i^N
	:=
	f_{xx}\left(x_i^N, y_i^N\right)/2, \;\; 
	B_i^N
	:= 
	f_{yy}\left(x_i^N,y_i^N\right)/2, \;\;
	C_i^N
	:=
	f_{xy}\left(x_i^N, y_i^N\right)/2, 
$$
$$
	\lambda_{\min;i}^N
	:=
	\left(A_i^N+B_i^N\right)/2-\sqrt{\left(A_i^N+B_i^N\right)^2/4-\left(A^N_iB^N_i-(C^N_i)^2\right)},
$$
$$
	\lambda_{\max;i}^N
	:=
	\left(A_i^N+B_i^N\right)/2+\sqrt{\left(A_i^N+B_i^N\right)^2/4-\left(A^N_iB^N_i-(C^N_i)^2\right)}. 
$$
Note that  
$$
	H\left(f;x_i^N,y_i^N\right)
	=
	4\left(A^N_iB_i^N-\left(C_i^N\right)^2\right) 
	= 
	4\lambda_{\min;i}^N\lambda_{\max;i}^N.
$$
Finally, let $f_{N;i}$ be the second degree Taylor polynomial of $f$ constructed at the point $\left(x_i^N,y_i^N\right)$ and denote its quadratic part by
$$
	Q_{N,i} 
	= 
	Q_{N,i}(x,y)
	:=
	A_i^N x^2+2C_i^N xy + B_i^N y^2,\qquad (x,y)\in\RR^2.
$$

\subsection{Construction of the family of ``good'' triangulations and the family of corresponding splines} 
\label{S7.2}

In what follows, let the fixed number $\varepsilon>0$ satisfy the restriction  $23 \varepsilon + 2\mu\left(G_{0;2\varepsilon}\right)<1$. Our aim in the present subsection is to construct ``nearly'' optimal triangulation $\triangle_{N;i}=\triangle_{N;i}(\varepsilon)$ of the square $D_i^N$ for every $i=1,\ldots,m_N^2$. Then we ``glue'' these triangulations into one triangulation $\triangle_N^{\varepsilon}$ of the square $D$. Once the construction of triangulation $\triangle_N^{\varepsilon}$ is complete, we will define the spline $s_N^{\varepsilon}\in \mathcal{S}\left(\triangle_N^{\varepsilon}\right)$ which is ``nearly'' optimal. 

We will split all the set of indices $i=1,...,m_N^2$ into the following four groups: 
$$
	\begin{array}{rcl}
		I_1(\varepsilon;N) 
			& 
		:= 
			&
		\left\{
			i\in\left\{1,\ldots,m_N^2\right\}\;:\;
				\lambda_{\min;i}^N
				\geqslant
				\varepsilon
		\right\}, 
		\\ [5pt]
		I_2(\varepsilon;N) 
			& 
		:= 
			&
		\left\{
			i\in\left\{1,\ldots,m_N^2\right\}\;:\;
				\omega\left(\lambda_{\min},m_N^{-1}\right)
				< 
				\lambda_{\min;i}^N
				<
				\varepsilon
		\right\},
		\\ [5pt]
		I_3(\varepsilon;N) 
			& 
		:= 
			& 
		\left\{
			i\in\left\{1,\ldots,m_N^2\right\}\;:\;
				\lambda_{\min;i}^N 
				\leqslant 
				\omega\left(\lambda_{\min},m_N^{-1}\right),\; 
				\lambda_{\max;i}^N
				\geqslant
				\varepsilon^2
		\right\},
		\\[5pt]
		I_4(\varepsilon;N) 
			& 
		:= 
			& 
		\left\{
			i\in\left\{1,\ldots,m_N^2\right\}\;:\;
				\lambda_{\min;i}^N
				< 
				\omega\left(\lambda_{\min},m_N^{-1}\right),\;
				\lambda_{\max;i}^N
				<
				\varepsilon^2
		\right\}. 
	\end{array}
$$
Evidently, the sets $I_1(\varepsilon;N)$, $I_2(\varepsilon;N)$, $I_3(\varepsilon;N)$ and $I_4(\varepsilon;N)$ are pairwise non-intersecting for every $N$. 

Let us describe how we will construct triangulations of each $D_i^N$ depending on which out of four above groups index $i$ is in.

$\bf\left.a\right) $ For this $i\in I_1(\varepsilon;N)$, we set $
	n_i^N
	=
	n_i^N(\varepsilon)
	:=
	\left[
		\frac{\displaystyle N(1-23\varepsilon-2\mu\left(G_{0;2\varepsilon}\right)) H^{\frac{p}{2(p+1)}}\left(f;x_i^N,y_i^N\right)}{\displaystyle \sum_{j\in I_1(\varepsilon;N)} H^{\frac{p}{2(p+1)}}\left(f;x_j^N, y_j^N\right)}
	\right] + 1.
$
Here $[a]$ stands for the integer part of a real number $a$. This quantity $n_i^N$ is the number of triangles in the triangulation of $D^N_i$ (before refining shapes along the boundary) and is obtained by minimizing the global error under the condition $\displaystyle \sum_{i\in I_1(\varepsilon; N)} n^N_i \approx N$.

Since  
$$
  n_i^N > (1-23\varepsilon-2\mu\left(G_{0;2\varepsilon}\right)) (4\varepsilon^2)^{\frac{p}{2(p+1)}}  \|H\|_{\infty}^{-\frac{p}{2(p+1)}}\cdot m_N^{-2}N,
$$
and due to relation~(\ref{star}), we see that $n_i^N$ tends to infinity as $N$ gets large.

In order to formalize further constructions, for an arbitrary triangle $T$ in the plane, by $\textrm{Til}(T)$ we denote the tiling of the plane, generated by $T$ in the following way: we take a triangle $\widetilde{T}$ which is symmetric to $T$ with respect to the midpoint of one of its sides, and then we tile $\RR^2$ with the shifts of $T\cup\widetilde{T}$. 

Let us describe the algorithm for construction of triangulation $\triangle_{N;i}$: 

\begin{enumerate}[topsep=0pt, partopsep=0pt, itemsep=2pt, parsep=2pt]
\item Let $T_i^N$ be the triangle that delivers infimum in~(\ref{Pr1}).
\item By $T_i^N$ we denote a re-scaling of $F(T)$ such that $ \left|T_i^N\right|= m_N^{-2} \left(n_i^N\right)^{-1}$.
\item With the help of the triangle $T_i^N$ we generate the tiling $\textrm{Til}(T_i^N)$ of the plane. 
\item Every triangle from $\textrm{Til}(T_i^N)$ that lies completely inside the square $D_i^N$ we include into triangulation $\triangle_{N;i}$. 
\item For every triangle $T\in \textrm{Til}(T_i^N)$ that has common points with the boundary of $D_i^N$, we consider the intersection $T\cap D_i^N$. Evidently, it is a polygon with at most seven vertices. We split this polygon into at most five triangles without adding new vertices and include them into triangulation $\triangle_{N;i}$.
\end{enumerate}

Let us estimate the number of triangles in $\triangle_{N;i}$. Since the quadratic form ${Q_{N,i}}$ satisfies the conditions of Lemma~\ref{L8}, we derive that there exists a constant $c_1=c_1(\varepsilon)$, independent of $N$, such that 
\begin{equation}
\label{trtr}
	\textrm{diam}\,T_i^N 
	\leqslant 
	c_1 m_N^{-1}\left(n_i^N\right)^{-1/2}, \qquad N\to\infty.
\end{equation}
Consequently, the number of triangles $T\in \textrm{Til}\left(T_i^N\right)$ that have nonempty intersection with the boundary of $D_i^N$ is $O\left(\sqrt{n_i^N}\right)$ as $N\to\infty$. Therefore, the total number of triangles in ${\triangle}_{N;i}$ is
$$
	n_i^N + O\left(\sqrt{n_i^N}\right) 
	= 
	n_i^N + o\left(m_N^{-2}N\right),\qquad N\to\infty. 
$$

$\bf \left.b\right)$ Let $i\in I_2(\varepsilon;N)$. Since $m_N^{-2}N\to \infty$ with $N$, for each $N$ there exists an integer, denote it by $r_1=r_1(N)$, such that
$$
	2^{-1}m_N^{-2} N 
	\leqslant 
	r_1^2 
	\leqslant 
	m_N^{-2}N. 
$$
Let us subdivide the square $D_i^N$ into squares of the size $\displaystyle m_N^{-1} r_1^{-1}\times m_N^{-1} r_1^{-1}$ whose sides are parallel to the sides of $D_i^N$. Then inside each small square we draw one of its diagonals. Thus, we obtain the triangulation $\triangle_{N;i}$ of $D_i^N$ consisting of $2r_1^2$ equal isosceles right triangles. In addition, select an arbitrary such triangle and denote it by $T_i^N$. 

$\bf\left.c\right)$ Let $i\in I_3(\varepsilon;N)$. Denote by $\overline{\xi}_i$ and $\overline{\eta}_i$ the eigenvectors of the quadratic form {$Q_{N,i}$ corresponding to the eigenvalues $\lambda_{\min;i}^N$ and $\lambda_{\max;i}^N$}. For this $\varepsilon>0$, and for each $N$, there exists an integer, denote it by $r_2$, such that
$$
	2^{-1}\varepsilon m_N^{-2}N 
	\leqslant 
	r_2^2 
	\leqslant 
	\varepsilon m_N^{-2}N. 
$$
Let $\Pi_i^N$ be the rectangle of the size $\displaystyle m_N^{-1}\times m_N^{-1} r_2^{-2}$ whose sides are parallel to vectors $\overline{\xi}_i$ and $\overline{\eta}_i$, {respectively}. We will draw inside this rectangle one of its diagonals and denote any of two constructed triangles by $T_i^N$. Let $T$ be an arbitrary triangle from $\textrm{Til}\left(T_i^N\right)$. If $T$ lies completely in the interior of $D_i^N$ then we include it into triangulation $\triangle_{N;i}$. Otherwise, we split every polygon, which is intersection of $D_i^N\cap T$, into at most five triangles without adding new vertices and include them into triangulation $\triangle_{N;i}$. Note that the number of triangles in $\triangle_{N;i}$ does not exceed $10r_2^2$. 

$\bf \left.d\right)$ Finally, let $i\in I_4(\varepsilon;N)$. Similarly to the case $\left.b\right)$, we subdivide the square $D_i^N$ into squares of the size $\displaystyle m_N^{-1}r_2^{-1}\times m_N^{-1}r_2^{-1}$ whose sides are parallel to the sides of $D_i^N$. Then inside each small square we draw one of its diagonals. Thus, we obtained the triangulation $\triangle_{N;i}$ of $D_i^N$ consisting of $2r_2^2$ equal isosceles right triangles (denoted by $T_i^N$). 

Let us estimate the overall number $\widetilde{N}$ of triangles in above-constructed triangulations $\triangle_{N;i}$. Due to the choice of numbers $r_1$ and $r_2$ we obtain  
$$
	\begin{array}{rcl}
		\widetilde{N} 
			& 
		\leqslant 
			& 
		\displaystyle \sum\limits_{i\in I_1(\varepsilon;N)} \left(n_i^N + o\left(m_N^{-2}N\right)\right) + \sum\limits_{i\in I_2(\varepsilon;N)} 2r_1^2 + \sum\limits_{i\in I_3(\varepsilon;N)} 10 r_2^2 + \sum\limits_{i\in I_4(\varepsilon;N)} 2r_2^2 
		\\ [10pt]
			& 
		\leqslant 
			& 
		\displaystyle (1-23\varepsilon - 2\mu\left(G_{0;2\varepsilon}\right))N  + o(N) + 2\mu\left(G_{0;2\varepsilon}\right) N + 12m_N^2r_2^2
		\\[5pt]
			& 
		\leqslant 
			& 
		\displaystyle \left(1-23\varepsilon\right)N + o\left(N\right)  + 12\varepsilon N 
		= 
		\left(1-11\varepsilon\right)N + o(N) \qquad\textrm{as}\qquad N\to\infty.
	\end{array}
$$

Now we ``glue'' triangulations $\triangle_{N;i}$ according to the following rule: 
\begin{enumerate}[topsep=0pt, partopsep=0pt, itemsep=2pt, parsep=2pt]
\item We include into triangulation $\triangle_N^{\varepsilon}$ every triangle $T\in \triangle_{N;i}$, $i=1,\ldots,m_N^2$, which does not intersect with the boundary of the square $D_i^N$.
\item For every $i=1,\dots,m_N^2$, denote by $W_i^N$ the set of the vertices of triangulation $\triangle_{N;i}$ which lie on the boundary of $D_i^N$. For arbitrary $i,j=1,\ldots,m_N^2$, $i\ne j$, we set $S_{i,j}=D_i^N\cap D_j^N$.
\item We subdivide every triangle $T\in\triangle_{N;i}$ that has non-empty intersection with $S_{i,j}$  by joining the vertices of $T$ with the points from $W_j^N\cap T$. Finally, we include all obtained triangles into triangulation $\triangle_N^{\varepsilon}$.
\end{enumerate} 

Let us estimate the number $\widehat{N}$ of triangles in $\triangle_N^{\varepsilon}$. By $\# A$ we denote the number of points in a finite set $A$. Then 
$$
	\widehat{N} 
	\leqslant 
	\widetilde{N} + \sum\limits_{i=1}^{m_N^2} \#\left(W_i^N\right). 
$$
Note that $\#\left(W_i^N\right)\leqslant 10r_2^2$ for every $i\in I_3(\varepsilon;N)$ and $\#\left(W_i^N\right) = o\left(m_N^{-2}N\right)$ otherwise. Hence, 
$$
	\widehat{N} 
	\leqslant 
	(1-11\varepsilon)N + o(N) + m_N^2\cdot\left(10 r_2^2 + o\left(m_N^{-2}N\right)\right) 
	\leqslant 
	(1-\varepsilon)N + o\left(N\right). 
$$ 
Therefore, $\widehat{N}\leqslant N$ for all $N$ large enough.

Now we are ready to construct the ``nearly'' optimal spline $s_N^{\varepsilon}$ on the triangulation $\triangle_N^{\varepsilon}$. For $i\in I_1(\varepsilon;N)$ and $T\in \Delta_N^{\varepsilon}\cap \textrm{int}\,D_i^N$, let $s_N^{\varepsilon}$ be the sum of two linear polynomials:  $f_{N,i} - Q_{N,i}$ and polynomial of the best approximation of $f_{N,i}$ on triangle $T$. Then due to Remark~4 we see that spline $s_N^{\varepsilon}$ is continuous on the union of interior triangles from $\Delta_N^{\varepsilon}\cap \textrm{int}\,D_i^N$. 

Finally, let $s_N^{\varepsilon}$ interpolate the function $f$ at the remaining vertices of $\triangle_N^{\varepsilon}$, i.e. at the vertices located in the interior of squares $D_i^N$ with $i\in I_2(\varepsilon;N)\cup I_3(\varepsilon;N)\cup I_4(\varepsilon;N)$ as well as at the vertices located along the boundaries of all $D_i^N$'s. This would automatically ``glue'' the spline $s_N^{\varepsilon}$.


Therefore, for every sufficiently small $\varepsilon>0$, there exists $N(\varepsilon)\in\NN$ such that for each  $N>N(\varepsilon)$ we have constructed the triangulation $\Delta_N^{\varepsilon}$ with at most $N$ triangles and corresponding continuous piecewise linear spline $s_N^{\varepsilon}\in\mathcal{S}\left(\Delta_N^{\varepsilon}\right)$.

\subsection{The proof of estimate from above} 
\label{S7.3}

In this subsection we will prove that 
\begin{equation}
\label{main}
	\lim\limits_{\varepsilon\to 0} \left(\liminf\limits_{N\to\infty} N\cdot \left\|f-s_N^{\varepsilon}\right\|_{p;\alpha,\beta} \right)
	\leqslant 
	2^{-1} C_{p;\alpha,\beta}\left\|\sqrt{H}\right\|_{\frac{p}{p+1}}. 
\end{equation}
However, we need several preliminary results concerning the estimates for the deviation of spline $s_N^{\varepsilon}$ from the function $f$ on squares $D_i^N$, $i=1,\ldots,m_N^2$. By $\triangle_{i}^N$ and $\widetilde{\triangle}_i^N$, $i=1,\ldots,m_N^2$, we denote the set of triangles $T\in \triangle_N^{\varepsilon}$ that lie in the square $D_i^N$ and in the interior of $D_i^N$, respectively. Let also 
$$
	\Omega_i^N 
	:= 
	\left\{(x,y)\;:\;\;\exists \;\;T \in \widetilde{\triangle}_i^N \;\;\;\hbox{such that}\;\;\; (x,y)\in T\right\}. 
$$

Let us find the upper estimates for the quantity $\left\|f-s_N^{\varepsilon}\right\|_{L_{p;\alpha,\beta}\left(D_i^N\right)}$ separately in two different situations: $\left.1\right)$~$i\in \left\{1,\ldots,m_N^2\right\}\setminus I_3(\varepsilon;N)$ and $\left.2\right)$ $i\in I_3(\varepsilon;N)$. 

$\bf\left.1\right)$ As a first step let $i$ be an arbitrary index from the set $\left\{1,\ldots,m_N^2\right\}\setminus I_3(\varepsilon;N)$. Note that 
\begin{equation}
\label{first_rofl}
	\left\|f-s_N^{\varepsilon}\right\|_{L_{p;\alpha,\beta}\left(D_i^N\right)}^p  
	= 
	\left\|f-s_N^{\varepsilon}\right\|_{L_{p;\alpha,\beta}\left(\Omega_i^N\right)}^p + \sum\limits_{T\in \triangle_i^N\setminus\widetilde{\triangle}_i^N}\left\|f - s_N^{\varepsilon}\right\|_{L_{p;\alpha,\beta}\left(T\right)}^p. 
\end{equation}
Due to the triangle inequality, we have
\begin{equation}
\label{second_rofl}
	\left\|f-s_N^{\varepsilon}\right\|_{L_{p;\alpha,\beta}\left(\Omega_i^N\right)} 
	\leqslant 
	\left\|f-f_{N;i}\right\|_{L_{p;\alpha,\beta}\left(\Omega_i^N\right)} + \left\|f_{N;i}-s_N^{\varepsilon}\right\|_{L_{p;\alpha,\beta}\left(\Omega_i^N\right)}. 
\end{equation}
We estimate the first term in the right-hand side of~(\ref{second_rofl}) with the help of Lemma~\ref{L3}: 
\begin{equation}
\label{third_rofl}
	\begin{array}{rcl}
		\displaystyle \left\|f-f_{N;i}\right\|_{L_{p;\alpha,\beta}\left(\Omega_i^N\right)} 
			& 
		\leqslant 
			& 
		\displaystyle  \max\{\alpha;\beta\} \left\|f-f_{N;i}\right\|_{L_{\infty}\left(D_i^N\right)} \left|D_i^N\right|^{1/p}
		\\ [5pt]
  			& 
  		\leqslant 
  			& 
  		\displaystyle \max\{\alpha;\beta\} m_N^{-2-2/p}\;\omega_2\left(f, m_N^{-1}\right) 
  		\leqslant 
  		\displaystyle \varepsilon m_N^{-2/p}N.  
	\end{array}
\end{equation}

As for the second term in the right-hand side of~(\ref{second_rofl}), we observe that
\begin{equation}
\label{29}
	\left\|f_{N;i}-s_N^{\varepsilon}\right\|_{L_{p;\alpha,\beta}\left(\Omega_i^N\right)}^p
	= 
	\sum\limits_{T\in \widetilde{\triangle}_i^N} \left\|f_{N;i}-s_N^{\varepsilon}\right\|^p_{L_{p;\alpha,\beta}\left(T\right)} 
	= 
	\left(\#\widetilde{\triangle}_i^N\right) \cdot\left\|f_{N;i}-s_N^{\varepsilon}\right\|^p_{L_{p;\alpha,\beta}\left(T_i^N\right)}. 
\end{equation}
In order to obtain upper estimates for $\left\|f_{N;i}-s_N^{\varepsilon}\right\|_{L_{p;\alpha,\beta}\left(T_i^N\right)}$ we should consider three cases: $\left.a\right)$ $i\in I_1(\varepsilon;N)$, $\left.b\right)$ $i\in I_2(\varepsilon;N)$, and $\left.c\right)$ $i\in I_4(\varepsilon;N)$. 

$\bf\left.a\right)$ First, we assume that $i\in I_1(\varepsilon;N)$. Due to the algorithm for construction of triangulation $\triangle_N(\varepsilon)$ described in the previous subsection and due to Corollary~\ref{cor}, we have that $\#\widetilde{\triangle}_i^N \leqslant n_i^N$ and 
$$
	\left\|f_{N;i} - s_N^{\varepsilon}\right\|_{L_{p;\alpha,\beta}\left(T_i^N\right)} 
	= 
	2^{-1}C_{p;\alpha,\beta}H^{1/2}\left(f;x_i^N,y_i^N\right)\left|T_i^N\right|^{1+1/p} 
	= 
	\frac{C_{p;\alpha,\beta}H^{1/2}\left(f;x_i^N,y_i^N\right)}{2 m_N^{2+2/p}\left(n_i^N\right)^{1+1/p}}. 
$$
From here and from~(\ref{29}), we obtain 
\begin{equation}
\label{first_Omega}
	\left\|f_{N;i}-s_N^{\varepsilon}\right\|_{L_{p;\alpha,\beta}\left(\Omega_i^N\right)}^p
	\leqslant 
	2^{-p}C^{p}_{p;\alpha,\beta}\cdot H^{p/2}\left(f;x_i^N,y_i^N\right) \cdot m_N^{-2\left(p+1\right)}\left(n_i^N\right)^{-p}. 
\end{equation}

$\bf\left.b\right)$ Next, we assume that $i\in I_2(\varepsilon;N)$. Then $\#\widetilde{\triangle}_i^N \leqslant 2r_1^2$. For every triangle $T$ and continuous on $T$ function $g$, by $s_{g,T}$ we denote the linear function interpolating $g$ at the vertices of $T$. Since $\displaystyle\left|\frac{\partial^2 f_{N;i}}{\partial\overline{n}^2}\right|\leqslant \lambda_{\max;i}^N$ for every unit vector $\overline{n}$, after change of variables, we have (see ~\cite{Sub, Kil}) 
$$
	\begin{array}{rcl}
		\displaystyle \left\|f_{N;i}-s_N^{\varepsilon}\right\|_{L_{p;\alpha,\beta}\left(T_i^N\right)} 
			& 
		\leqslant 
			& 
		\displaystyle  \frac{\max\{\alpha;\beta\} \lambda_{\max;i}^N}{2} \left(\int_{T_i^N}\left|q(x,y) - s_{q,T_i^N}(x,y)\right|^p\,dx dy\right)^{1/p} 
		\\[10pt]
			& 
		\leqslant 
			& 
		\displaystyle 2^{-1} k_1 \left(m_N^2r_1^2\right)^{-1-1/p} 
		\leqslant 
		k_1 \left(m_N^2r_1^2\right)^{-1-1/p}. 
	\end{array}
$$
Here we recall that $q(x,y)=x^2+y^2$, and denote by $k_1,k_2,\ldots$ constants that are independent of $N$ and $\varepsilon$. Therefore, 
\begin{equation}
\label{second_Omega}
	\left\|f_{N;i}-s_N^{\varepsilon}\right\|_{L_{p;\alpha,\beta}\left(\Omega_i^N\right)}^p
	\leqslant 
	2k_1^p m_N^{-2}N^p.
\end{equation}

$\bf \left.c\right)$ Finally, let $i\in I_4(\varepsilon;N)$. Then like to the previous case we obtain that $\#\widetilde{\triangle}_i^N \leqslant 2r_2^2$ and 
\begin{equation}
\label{third_Omega}
	\left\|f_{N;i}-s_N^{\varepsilon}\right\|_{L_{p;\alpha,\beta}\left(\Omega_i^N\right)}^p
	\leqslant 
	2k_1^p\varepsilon^p m_N^{-2} N^p.
\end{equation}

The analysis of three above cases is complete. Next we will estimate the deviation of spline $s_N(\varepsilon)$ from the function $f$ on arbitrary triangle $T\in \triangle_i^N\setminus \widetilde{\triangle}_i^N$. For every such triangle, 
$$
	\left\|f-s_N^{\varepsilon}\right\|_{L_{p;\alpha,\beta}(T)} 
	\leqslant 
	\max\{\alpha;\beta\} \left(\left\|f-s_{f,T}\right\|_{L_{p}(T)} + \left\|s_{f,T} - s_N^{\varepsilon}\right\|_{L_p(T)}\right). 
$$
By the triangle inequality we obtain that 
$$
	\left\|f-s_{f,T}\right\|_{L_{p}(T)} 
	\leqslant 
	\left\|f_{N;i}-s_{f_{N;i},T}\right\|_{L_p(T)} + \left\|f-f_{N;i}\right\|_{L_p(T)} + \left\|s_{f,T} - s_{f_{N;i},T}\right\|_{L_p(T)}.
$$
In view of Lemma~\ref{L3} we have that 
$$
	\left\|f-f_{N;i}\right\|_{L_p(T)}
	\leqslant
	\left\|f-f_{N;i}\right\|_{L_{\infty}(T)} |T|^{1/p} 
	\leqslant 
	m_N^{-2}\,\omega_2\left(f,m_N^{-1}\right) |T|^{1/p}
	=
	O\left(N^{-1-1/p}\right), 
$$
$$
	\left\|s_{f,T} - s_{f_{N;i},T}\right\|_{L_p(T)} 
	\leqslant 
	\left\|f-f_{N;i}\right\|_{L_{\infty}(T)} |T|^{1/p} 
	= 
	O\left(N^{-1-1/p}\right),
$$
$$
	\left\|s_{f,T}-s_N^{\varepsilon}\right\|_{L_p(T)} 
	\leqslant 
	\left\|f-f_{N;i}\right\|_{L_{\infty}(T)}|T|^{1/p} 
	= 
	O\left(N^{-1-1/p}\right), 
$$
as $N\to\infty$. In addition, by Corollary~\ref{cor}
$$
	\left\|f_{N;i}-s_{f_{N;i},T}\right\|_{L_p(T)} 
	\leqslant 
	\left\|f_{N;i}-s_{f_{N;i},T_i^N}\right\|_{L_p\left(T_i^N\right)} 
	\leqslant 
	2^{-1} \lambda_{\max;i}^N\cdot \left\|q-s_{q,T_i^N}\right\|_{L_p\left(T_i^N\right)}. 
$$
Due to inequality~(\ref{trtr}) in the case $i\in I_1(\varepsilon;N)$ we have 
$$
	\left\|q-s_{q,T_i^N}\right\|_{L_p\left(T_i^N\right)} 
	\leqslant 
	O\left(N^{-1-1/p}\right). 
$$
In the case $i\in I_2(\varepsilon;N)\cup I_4(\varepsilon;N)$ we have 
$$
	\left\|q-s_{q,T_i^N}\right\|_{L_p\left(T_i^N\right)} 
	= 
	k_2 m_N^{-2-2/p}r_1^{-2-2/p} 
	= 
	O\left(N^{-1-1/p}\right).
$$
Therefore, 
$$
	\left\|f-s_N^{\varepsilon}\right\|_{L_{p;\alpha,\beta}(T)}  
	= 
	O\left(N^{-1- 1/p}\right),\qquad N\to\infty. 
$$

Let us remind that we are considering the case when $i\in\left\{1,\ldots,m_N^2\right\}\setminus I_3(\varepsilon;N)$. Due to the algorithm for construction of triangulation $\triangle_N(\varepsilon)$, we conclude that the number of triangles in $\triangle_i^N\setminus \widetilde{\triangle}_i^N$ is $o\left(m_N^{-2}N\right)$ as $N\to\infty$. This implies that 
\begin{equation}
\label{ostatki}
	\sum\limits_{T\in \triangle_i^N\setminus \widetilde{\triangle}_i^N}\left\|f - s_N^{\varepsilon}\right\|_{L_{p;\alpha,\beta}\left(T\right)}^p 
	= 
	o\left(m_N^{-2} N^{-p}\right). 
\end{equation}

$\bf\left.2\right)$ Now let us consider $i\in I_3(\varepsilon;N)$. It can be easily seen that 
\begin{equation}
\label{eht_28}
	\left\|f- s_N^{\varepsilon}\right\|^p_{L_{p;\alpha,\beta}\left(D_i^N\right)} 
	\leqslant 
	\left(\max\{\alpha,\beta\}\right)^p \sum\limits_{T\in \triangle_i^N} \left\|f-s_N^{\varepsilon}\right\|_{L_p(T)}^p. 
\end{equation}
Let $T$ be an arbitrary triangle in $\triangle_i^N$. Then 
$$
	\left\|f-s_N^{\varepsilon}\right\|_{L_p(T)} 
	\leqslant 
	\left\|f-f_{N;i}\right\|_{L_p(T)} + \left\|f_{N;i} - s_{f_{N;i},T}\right\|_{L_p(T)} + \left\|s_N^{\varepsilon} - s_{f_{N;i},T}\right\|_{L_p(T)}. 
$$
With the help of Lemma~\ref{L3} we obtain the following upper estimates 
$$
	\left\|f-f_{N;i}\right\|_{L_p(T)} 
	\leqslant 
	\left\|f-f_{N;i}\right\|_{L_{\infty}(T)} |T|^{1/p} 
	\leqslant 
	\varepsilon N^{-1}\cdot \left(2m_N^2r_2^2\right)^{-1/p},
$$
$$
	\left\|s_N^{\varepsilon}- s_{f_{N;i},T}\right\|_{L_p(T)} 
	\leqslant 
	\varepsilon N^{-1}\cdot \left(2m_N^2r_2^2\right)^{-1/p}. 
$$

In addition, we set $\tilde{q}(x,y):=\lambda_{\min;i}^N\,x^2+\lambda_{\max;i}^N\,y^2$ and observe that 
$$
	\begin{array}{rcl}
		\displaystyle\left\|f_{N;i} - s_{f_{N;i},T}\right\|_{L_p(T)}^p 
			& 
		\leqslant 
			& 
		\displaystyle\int_0^{m_N^{-1}}\int_0^{m_N^{-1}r_2^{-2}}\left|\tilde{q}(x,y) - s_{\tilde{q},T_i^N}(x,y)\right|^p\,dy\,dx 
		\\ [10pt]
			& 
		\leqslant 
			& 
		\displaystyle \int_0^1\int_0^1 \left|\frac{\omega\left(\lambda_{\min},m_N^{-1}\right)}{m_N^2}u^2 + \frac{\left\|\lambda_{\max}\right\|_{\infty}}{m_N^2r_2^4}v^2-s_{\tilde{q},T_i^N}\left(\frac{u}{m_N},\frac{v}{m_N r_2^2}\right)\right|^p \frac{du\,dv}{m_N^2 r_2^2}
		\\ [10pt]
			& 
		\leqslant 
			&
		\displaystyle\frac{\omega^p\left(\lambda_{\min},m_N^{-1}\right)}{m_N^{2(p+1)}r_2^{2(p+1)}} \cdot\int_0^1\left|u^2-u\right|^p\,du + \frac{\left\|\lambda_{\max}\right\|_{\infty}^p}{m_N^{2(p+1)}r_2^{4p+2}} \cdot\int_0^1 \left|v^2-v\right|^p\,dv. 
	\end{array}
$$
Hence, taking into account (\ref{m_N}), we obtain
\begin{equation}
\label{the_last_inequality}
	\left\|f_{N;i}-s_{f_{N;i},T_i^N}\right\|_{L_p(T)} 
	\leqslant 
	\frac{k_3}{\left(m_N^2 r_2^2\right)^{1/p}}\left(m_N^{-2}\omega\left(\lambda_{\min},m_N^{-1}\right) + \frac{2\left\|\lambda_{\max}\right\|_{\infty}}{\varepsilon N r_2^2}\right)
	\leqslant 
	\frac{k_3(\varepsilon + o(1))}{N\cdot \left(m_N^2 r_2^2\right)^{1/p}} 
\end{equation}
as $N\to\infty$. 

Therefore, we arrive at the following estimate in the case when $i\in I_3(\varepsilon;N)$
\begin{equation}
\label{30.1}
	\|f-s_N^{\varepsilon}\|_{L_p(T)}
	\leqslant 
	\frac{2\varepsilon}{N}\cdot \frac{1}{2^{1/p}\left(m_N^2r^2_2\right)^{1/p}}+ \frac{k_3(\varepsilon + o(1))}{N\cdot \left(m_N^2 r_2^2\right)^{1/p}}
	\leqslant 
	\frac{\left(k_3+2\right)(\varepsilon+o(1))}{N(m_N^2 r_2^2)^{1/p}}.
\end{equation}

Now, we are ready to prove inequality~(\ref{main}). Indeed, 
\begin{equation}
\label{main_inequality}
	\left\|f-s_N^{\varepsilon}\right\|_{p;\alpha,\beta}^p 
	= 
	\sum\limits_{i=1}^{m_N^2} \left\|f-s_N^{\varepsilon}\right\|_{L_{p;\alpha,\beta}\left(D_i^N\right)}^p 
	= 
	\sum\limits_{j=1}^{4}\sum\limits_{i\in I_j(\varepsilon;N)} \left\|f-s_N^{\varepsilon}\right\|_{L_{p;\alpha,\beta}\left(D_i^N\right)}^p. 
\end{equation}
Let us estimate each of four terms in~(\ref{main_inequality}) independently. 

$\bf\left.1\right)$ Combining inequalities~(\ref{first_rofl}),~(\ref{second_rofl}),~(\ref{third_rofl}),~(\ref{first_Omega}) and~(\ref{ostatki}) we see that for every $i\in I_1(\varepsilon;N)$, 
$$
	\begin{array}{rcl}
		\displaystyle\left\|f-s_N^{\varepsilon}\right\|_{L_{p;\alpha,\beta}\left(D_i^N\right)}^p 
			& 
		\leqslant 
			&
		\displaystyle\left[\frac{C_{p;\alpha,\beta}}{2}\cdot\frac{H^{1/2}\left(f;x_i^N,y_i^N\right)}{m_N^{2+2/p}n_i^N} + \frac{\varepsilon}{N m_N^{2/p}}\right]^p + o\left(\frac{1}{m_N^2 N^p}\right) 
		\\ [14pt] 
			& 
		\leqslant 
			& 
		\displaystyle \left(\frac{C_{p;\alpha,\beta}}{2}\right)^{p}\cdot\frac{H^{p/2}\left(f;x_i^N,y_i^N\right)} {m_N^{2\left(p+1\right)}\left(n_i^N\right)^p} + \frac{k_4\varepsilon}{N^p m_N^2} + o\left(\frac{1}{N^p m_N^2}\right),\qquad N\to\infty. 
	\end{array}
$$
From the latter inequality, definition of numbers $n_i^N$, and the Riemann integrability of function $\sqrt{H(f;x,y)}$ we obtain 
\begin{equation}
\label{first_term}
	\begin{array}{l}
		\displaystyle  \sum\limits_{i\in I_1(\varepsilon;N)} \left\|f-s_N^{\varepsilon}\right\|_{L_{p;\alpha,\beta}\left(D_i^N\right)}^p 
		\leqslant 
		\displaystyle \frac{C_{p;\alpha,\beta}^p}{2^p m_N^{2(p+1)}} \sum\limits_{i\in I_1(\varepsilon;N)} \frac{H^{p/2}\left(f;x_i^N,y_i^N\right)}{\left(n_i^N\right)^p} + \frac{k_4\varepsilon + o(1)}{N^p}
		\\ [10pt]
		\qquad\qquad\leqslant 
		\displaystyle\frac{C^p_{p;\alpha,\beta}m_N^{-2(p+1)}}{2^p \left(1-23\varepsilon -2\mu\left(G_{0;2\varepsilon}\right)\right)^p N^p} \left( \displaystyle \sum_{j \in I_1(\varepsilon;N)} H^{\frac{p}{2(p+1)}}(x_j^N, y_j^N)\right)^{p+1} + \frac{k_4\varepsilon + o(1)}{N^p} 
		\\[10pt] 
		\qquad\qquad \leqslant 
		\displaystyle \frac{C_{p;\alpha,\beta}^p}{2^p\left(1-23\varepsilon -2\mu\left(G_{0;2\varepsilon}\right)\right)^p N^p}\left ( \left\|\sqrt{H}\right\|_{\frac p{p+1}}^p + o(1) \right)+\frac{k_4\varepsilon}{N^p}
	\end{array}
\end{equation}
as $N\to\infty$. 

$\bf \left.2\right)$ Let $i\in I_2(\varepsilon;N)$. In view of inequalities~(\ref{first_rofl}),~(\ref{second_rofl}),~(\ref{third_rofl}),~(\ref{second_Omega}), and~(\ref{ostatki}) we obtain 
$$
	\left\|f-s_N^{\varepsilon}\right\|_{L_{p;\alpha,\beta}\left(D_i^N\right)}^p 
	\leqslant 
	\left(\left(2^{1/p}k_1+\varepsilon\right)^p + o(1)\right) m_N^{-2} N^{-p},\qquad N\to\infty. 
$$
From this we derive that 
\begin{equation}
\label{second_term}
	\sum\limits_{i\in I_2(\varepsilon;N)} \left\|f-s_N^{\varepsilon}\right\|_{L_{p;\alpha,\beta}\left(D_i^N\right)}^p 
	\leqslant 
	\mu\left(G_{0;2\varepsilon}\right)\cdot \left(\left(2^{1/p}k_1+\varepsilon\right)^p + o(1)\right)  N^{-p},\qquad N\to\infty. 
\end{equation}

$\bf \left.3\right)$ Combining inequalities~(\ref{eht_28}) and~(\ref{the_last_inequality}), we obtain that 
\begin{equation}
\label{third_term}
	\begin{array}{rcl}
		\displaystyle  \sum\limits_{i\in I_3(\varepsilon;N)} \left\|f-s_N^{\varepsilon}\right\|_{L_{p;\alpha,\beta}\left(D_i^N\right)}^p 
			& 
		\leqslant 
			& 
		\displaystyle \left(\max\{\alpha,\beta\}\right)^p \sum\limits_{i\in I_3(\varepsilon;N)} \sum\limits_{T\in \triangle_i^N} \left\|f-s_N^{\varepsilon}\right\|_{L_p(T)}^p 
		\\[10pt]
			& 
		\leqslant 
			& 
		\displaystyle \left(\max\{\alpha,\beta\}\right)^p \sum\limits_{i\in I_3(\varepsilon;N)} \sum\limits_{T\in \triangle_i^N} \frac{\left(k_3+2\right)^p(\varepsilon + o(1))^p}{N^p m_N^2r_2^2}
		\\[10pt]
			& 
		= 
			& 
		\displaystyle \left(\max\{\alpha,\beta\}\right)^p \cdot 10r_2^2 m_N^2 \frac{\left(k_3+2\right)^p(\varepsilon + o(1))^p}{N^p m_N^2r_2^2} 
		\\ [10pt] 
			& 
		= 
			&
		\displaystyle 10 \frac{\left(\max\{\alpha,\beta\}\left(k_3+2\right)\varepsilon\right)^p + o(1)}{N^p} \quad\textrm{as}\quad N\to\infty.
	\end{array} 
\end{equation}

$\bf\left.4\right)$ Finally, in view of inequalities~(\ref{first_rofl}),~(\ref{second_rofl}),~(\ref{third_rofl}),~(\ref{third_Omega}), and~(\ref{ostatki}) we conclude that 
\begin{equation}
\label{fourth_term}
	\sum\limits_{i\in I_4(\varepsilon;N)} \left\|f-s_N^{\varepsilon}\right\|_{L_{p;\alpha,\beta}\left(D_i^N\right)}^p 
	\leqslant 
	\left((2k_1^p+1)\varepsilon^p + o(1)\right) N^{-p},\qquad N\to\infty. 
\end{equation}

Now, we combine estimates~(\ref{main_inequality}),~(\ref{first_term}),~(\ref{second_term}),~(\ref{third_term}), and~(\ref{fourth_term}). As a result we obtain  
\begin{equation}
\label{new_ineq}
	\begin{array}{rcl}
		\displaystyle   \left\|f-s_N^{\varepsilon}\right\|_{p;\alpha,\beta}^p 
			& 
		\leqslant 
			&
		\displaystyle \frac{C_{p;\alpha,\beta}^p}{2^p\left(1-23\varepsilon -2\mu\left(G_{0;2\varepsilon}\right)\right)^p N^p} \left\|\sqrt{H}\right\|_{\frac p{p+1}}^p + \frac{k_4\varepsilon + \mu\left(G_{0;2\varepsilon}\right)\cdot(2k_1+\varepsilon)^p}{N^p} 
		\\ [10pt] 
			& 
			& 
		\displaystyle \quad + \frac{10\left(\max\{\alpha;\beta\}\right)^p\left(k_3+2\right)^p\varepsilon^p + (2k_1+1)^p\varepsilon^p + o(1)}{N^p},\qquad N\to\infty. 
	\end{array}
\end{equation}
Therefore, 
$$
	\begin{array}{rcl}
		\displaystyle   \limsup\limits_{N\to\infty} N^p\cdot \left\|f-s_N^{\varepsilon}\right\|^p_{p;\alpha,\beta} 
			& 
		\leqslant 
			& 
		\displaystyle \frac{C_{p;\alpha,\beta}}{2\left(1-23\varepsilon -2\mu\left(G_{0;2\varepsilon}\right)\right)} \left\|\sqrt{H}\right\|_{\frac p{p+1}} + k_4\varepsilon + \mu\left(G_{0;2\varepsilon}\right)\cdot(2k_1+\varepsilon)^p 
		\\ [10pt]
			& 
			& 
			\qquad + \displaystyle 10\left(\max\{\alpha;\beta\}\right)^p\left(k_3+2\right)^p\varepsilon^p + (2k_1+1)^p\varepsilon^p. 
	\end{array}
$$
The latter upper estimate implies inequality~(\ref{main_inequality}).

\section{Error of asymmetric approximation of $C^2$ functions by linear splines: lower estimate} \label{S8}

To prove the lower estimate of the optimal error, we need the following lemma. We omit the proof here as the lemma itself is rather evident.

\begin{lemma}
\label{L}
 Let $T$ be an arbitrary triangle. Then for every function $f\in C^2(T)$, $H(f;x,y)\geqslant K>0$ on $T$, it follows that there exists a constant $\Upsilon_f>0$ (independent of $T$) such that 
$$
	E(f,P_1)_{L_{p;\alpha,\beta}(T)}\geqslant K \Upsilon_f(\textrm{diam}\,T)^2|T|^{1/p}.
$$
\end{lemma}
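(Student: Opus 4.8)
The plan is to reduce the asymmetric error on $T$ to a one-sided error, and then to invoke Lemma~\ref{L9} together with a scaling argument. First I would observe that, since $\alpha,\beta>0$, one has $\|g\|_{L_{p;\alpha,\beta}(T)}\geqslant \min\{\alpha,\beta\}\|g\|_{L_p(T)}$ for any $g$, so it suffices to prove the estimate for the ordinary $L_p$-error, that is, to bound $E(f,\mathcal{P}_1)_{L_p(T)}$ from below by $c\,K(\operatorname{diam}T)^2|T|^{1/p}$ with $c$ depending only on $f$. Next, because $H(f;x,y)\geqslant K>0$ on $T$, Lemma~\ref{L9} gives a lower bound on the second directional derivative $|\partial^2 f/\partial\bar n^2|$ that is uniform in the direction $\bar n$: namely $|\partial^2 f/\partial\bar n^2|\geqslant c_f K$ where $c_f=\tfrac12\min\{\|f_{xx}\|_\infty^{-1},\|f_{yy}\|_\infty^{-1}\}$ depends only on $f$ and not on $T$. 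In particular $f$ restricted to $T$ is convex (or concave) with second derivatives bounded below, which is exactly what forces a linear approximant to deviate.

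The core step is a one-dimensional slicing argument. I would pick the direction $\bar n$ along the longest side of $T$ (of length $\geqslant\tfrac12\operatorname{diam}T$, say), and consider the sub-triangle, or rather a family of parallel chords of $T$ in the direction $\bar n$; a positive proportion of $T$ (in area) is covered by chords of length $\geqslant c\operatorname{diam}T$. On each such chord, for any linear $P\in\mathcal{P}_1$, the function $(f-P)$ restricted to the chord is a univariate $C^2$ function whose second derivative is $\geqslant c_f K$ in absolute value; the classical one-variable estimate $\|g-\ell\|_{L_p(I)}\geqslant c_p\|g''\|_{L_\infty}^{\mathrm{weak\text{-}below}}\,|I|^{2+1/p}$ (obtained by writing $g-\ell$ via its second difference, or simply: a $C^2$ function with $|g''|\geqslant m$ on an interval $I$ cannot be approximated by an affine function better than $c_p m|I|^{2+1/p}$ in $L_p(I)$) gives a lower bound $\gtrsim K(\operatorname{diam}T)^{2+1/p}$ on the one-dimensional $L_p$-norm of $f-P$ along that chord. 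Integrating these one-dimensional bounds over the family of long chords (using Fubini in coordinates aligned with $\bar n$, and that the measure of the set of good chords is $\gtrsim\operatorname{diam}T$, noting $|T|\asymp (\operatorname{diam}T)^2$ up to the shape) yields
$$
	\|f-P\|_{L_p(T)}^p \;\geqslant\; c\,K^p(\operatorname{diam}T)^{p(2+1/p)}\cdot(\operatorname{diam}T) \;=\; c\,K^p(\operatorname{diam}T)^{2p}|T|,
$$
hence $\|f-P\|_{L_p(T)}\geqslant c^{1/p}K(\operatorname{diam}T)^2|T|^{1/p}$, and taking the infimum over $P$ finishes the proof with $\Upsilon_f:=\min\{\alpha,\beta\}\,c^{1/p}c_f$ up to absolute constants.

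The main obstacle is making the slicing uniform with respect to the shape of $T$: a very thin triangle has most chords in one direction extremely short, so one must choose the slicing direction adapted to $T$ (the longest side), and one must be careful that ``a positive fraction of the area is covered by chords of length $\gtrsim\operatorname{diam}T$'' — this is true for every triangle once the direction is chosen along the longest side, but it requires a short elementary geometric argument, of the same flavor as Lemma~\ref{L10}. The other point to watch is that the univariate ``cannot be approximated by affine better than $c_p m|I|^{2+1/p}$'' estimate must be stated with the correct power of $|I|$ and with a constant depending only on $p$; this is standard (it follows, e.g., from the Peano kernel representation of the error of linear interpolation, or from comparing with the interpolant at the endpoints), and the convexity supplied by $H\geqslant K>0$ via Lemma~\ref{L9} guarantees $g''$ does not change sign, so there is no cancellation. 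Since the lemma only claims existence of some $\Upsilon_f>0$ and the dependence on $\operatorname{diam}T$, $|T|$, $K$ is as above, no sharp constants are needed and all these steps are routine; this is why the authors remark that the proof is evident and omit it.
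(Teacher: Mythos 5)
Your proposal is sound, and since the paper omits the proof of this lemma entirely (calling it ``evident''), there is nothing to diverge from: the route you take --- reduce the asymmetric norm to the symmetric one via $\|g\|_{p;\alpha,\beta}\geqslant\min\{\alpha,\beta\}\|g\|_p$, invoke Lemma~\ref{L9} for a direction-uniform, sign-constant lower bound $c_fK$ on the second directional derivative, and then slice along the longest side and apply the univariate estimate $E(u;\mathcal{P}_1)_{L_p(I)}\geqslant c_p m|I|^{2+1/p}$ for $u''\geqslant m$ --- is exactly the argument the placement of Lemma~\ref{L9} suggests the authors had in mind, and every step is standard. The only slip is in the Fubini bookkeeping: the measure of the set of ``good'' chord parameters (those chords parallel to the longest side with length $\geqslant\tfrac12\operatorname{diam}T$) is not $\gtrsim\operatorname{diam}T$ but $\gtrsim h/2=|T|/\operatorname{diam}T$, where $h$ is the height over the longest side; for a thin triangle these differ badly. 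With the correct measure the computation reads
$$
\|f-P\|_{L_p(T)}^p\;\geqslant\;c\,(c_fK)^p(\operatorname{diam}T)^{2p+1}\cdot\frac{|T|}{\operatorname{diam}T}\;=\;c\,(c_fK)^p(\operatorname{diam}T)^{2p}|T|,
$$
which is precisely the asserted bound, so the final formula you wrote is right even though the intermediate factor is not; no other repairs are needed.
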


Let the number $1\leqslant p<\infty$ be fixed. In this section we develop ideas of the paper~\cite{BBS} to prove that for every function $f:D\to \RR$ with nonnegative Hessian the following inequality holds true
\begin{equation}
\label{lower_estimate}
	\liminf_{N\to\infty} N \cdot R_N(f,L_{p; \alpha , \beta }) 
	\geqslant 
	2^{-1}C_{p;\alpha,\beta} \left\|\sqrt{H}\right\|_{\frac{p}{p+1}}. 
\end{equation}

For every $\varepsilon>0$, we define the sets $A_{\varepsilon}$ and $F_{\varepsilon}$ in the following way
$$
	\begin{array}{c}
		A_{\varepsilon} 
		:= 
		\left\{(x,y)\in D\;:\;H(f;x,y)<\varepsilon\right\}, 
		\\ [10pt]
		F_{\varepsilon} 
		:= 
		D\setminus A_{\varepsilon} 
		= 
		\left\{(x,y)\in D\;:\;H(f;x,y)\geqslant\varepsilon\right\}. 
	\end{array}
$$
For an arbitrary triangle $T$ in the plane, denote by $\textrm{diam}\,T$ and $|T|$ the length of the longest side and the area of $T$, respectively. In addition, let $U_T$ be an arbitrary point inside $T$. 

Let $N\in\NN$ and let $\triangle=\left\{T_i\right\}_{i=1}^N$ be an arbitrary triangulation of the square $D$ consisting of $N$ triangles. We need to distinguish (in triangulation $\triangle$) several types of triangles: normal, narrow, extra-long, and the triangles where the Hessian of function $f$ is relatively small. To this end for every $N\in\NN$, we set $I_N:=\left\{1,\ldots,N\right \}$, and define the following five subsets of $I_N$: 
\begin{itemize}
\item $M_1(\triangle;\varepsilon):=\{i\in I_N\;:\;T_i\subset A_{2\varepsilon}\}$; 
\item $\displaystyle M_2(\triangle;\varepsilon):=\left\{i\in I_N\;:\; T_i\subset F_{\varepsilon},\; \frac{\left(\textrm{diam}\,T_i\right)^2\omega_{2}(f,\textrm{diam}\,T_i)}{\sqrt{H(f;U_{T_i})}|T_i|} \leqslant \frac{\varepsilon C_{p;\alpha,\beta}}{4\max\{\alpha;\beta\}} \right\}$;
\item $\displaystyle M_3(\triangle;\varepsilon):=\left\{i\in I_N\;:\; T_i\subset F_{\varepsilon},\; \frac{\left(\textrm{diam}\,T_i\right)^2\omega_{2}(f,\textrm{diam}\,T_i)}{\sqrt{H(f;U_{T_i})}|T_i|} > \frac{\varepsilon C_{p;\alpha,\beta}}{4\max\{\alpha;\beta\}},\; \textrm{diam}\,T_i \leqslant \frac{\varepsilon}{\sqrt[4]{N}}\right\}$;
\item $\displaystyle M_4(\triangle;\varepsilon):=\left\{i\in I_N\;:\;T_i\subset F_{\varepsilon},\; \textrm{diam}\,T_i > \varepsilon N^{-1/4}\right\}$; 
\item $M_5(\triangle;\varepsilon) := \left\{i\in I_N\;:\;T_i\cap A_{\varepsilon}\ne\emptyset,\;T_i\cap F_{2\varepsilon}\ne\emptyset\right\}$.
\end{itemize}
According to the given definition, every index $i\in I_N$ belongs to at least one (possibly more) of the sets $M_{j}(\triangle;\varepsilon)$, $j=1,\ldots,5$.

Note that the set $M_3(\triangle;\varepsilon)$ consists of narrow triangles while the sets $M_4(\triangle;\varepsilon)$ and $M_5(\triangle;\varepsilon)$ consists of extra long triangles. In the next three propositions we will show that the overall area of these ``bad'' triangles in ``nearly'' optimal triangulation $\triangle$ is relatively small. 


\begin{lemma}
\label{L11}
Let $\varepsilon>0$ and let $\left\{\triangle_N\right\}_{N=1}^{\infty}$ be the sequence of triangulations $\triangle_N = \left\{T_i^N\right\}_{i=1}^{N}$ of $D$. If 
\begin{equation}
\label{assumption_of_lemma_X3}
	\liminf\limits_{N\to\infty} \left ( N\cdot\inf\limits_{s\in\mathcal{S}(\triangle_N)}\|f-s\|_{p;\alpha,\beta}\right )<\infty 
\end{equation}
then for all $N$ large enough 
\begin{equation}
\label{lower_area_bound}
	\sum_{i\in M_3(\triangle_N;\varepsilon)} \left|T_i^N\right| < \varepsilon.
\end{equation}
\end{lemma}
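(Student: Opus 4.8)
The plan is to argue by contradiction, exploiting the fact that triangles in $M_3(\triangle_N;\varepsilon)$ are narrow (so $\mathrm{diam}\,T_i^N \le \varepsilon N^{-1/4}$) yet carry a substantial approximation error because the second-order behaviour of $f$ genuinely dominates the Taylor remainder on them. First I would assume, for contradiction, that along some subsequence $\sum_{i\in M_3(\triangle_N;\varepsilon)}|T_i^N| \ge \varepsilon$. On every $T_i^N$ with $i \in M_3(\triangle_N;\varepsilon)$ we have $T_i^N \subset F_\varepsilon$, so $H(f;x,y)\ge\varepsilon$ on $T_i^N$, and Lemma~\ref{L} applies: there is a constant $\Upsilon_f>0$, independent of $T_i^N$, with
\begin{equation*}
	E(f;\mathcal{P}_1)_{L_{p;\alpha,\beta}(T_i^N)} \ge \varepsilon\,\Upsilon_f\,(\mathrm{diam}\,T_i^N)^2\,|T_i^N|^{1/p}.
\end{equation*}
The key is to convert this into a lower bound purely in terms of $|T_i^N|$. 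The defining inequality of $M_3$, namely $(\mathrm{diam}\,T_i^N)^2\omega_2(f,\mathrm{diam}\,T_i^N) > \tfrac{\varepsilon C_{p;\alpha,\beta}}{4\max\{\alpha;\beta\}}\sqrt{H(f;U_{T_i^N})}\,|T_i^N|$, together with the uniform bound $\sqrt{H(f;U_{T_i^N})}\le \|\sqrt H\|_\infty$ and $\omega_2(f,\delta)\le \|D^2 f\|_\infty =: M_f$, gives $(\mathrm{diam}\,T_i^N)^2 > c\,\varepsilon\,|T_i^N|$ for a constant $c=c(f,p,\alpha,\beta)>0$. Substituting this back yields $E(f;\mathcal{P}_1)_{L_{p;\alpha,\beta}(T_i^N)} \ge c'\varepsilon^2\,|T_i^N|^{1+1/p}$.

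Next I would sum over $i \in M_3(\triangle_N;\varepsilon)$ and use the additivity of the $p$-th power of the $L_{p;\alpha,\beta}$-norm over disjoint triangles together with the convexity inequality (power-mean / the inequality $\sum a_i^{1+p} \ge k^{-p}(\sum a_i)^{1+p}$ for $k$ terms, or more efficiently a direct estimate using $\mathrm{diam}\,T_i^N \le \varepsilon N^{-1/4}$). Concretely, from $(\mathrm{diam}\,T_i^N)^2 > c\varepsilon|T_i^N|$ and $\mathrm{diam}\,T_i^N \le \varepsilon N^{-1/4}$ we get $|T_i^N| < (c\varepsilon)^{-1}\varepsilon^2 N^{-1/2} = (c^{-1}\varepsilon) N^{-1/2}$, hence $|T_i^N|^{1/p} \le (c^{-1}\varepsilon)^{1/p} N^{-1/(2p)}$, and therefore
\begin{equation*}
	\inf_{s\in\mathcal{S}(\triangle_N)}\|f-s\|_{p;\alpha,\beta}^p
	\ge \sum_{i\in M_3(\triangle_N;\varepsilon)} E(f;\mathcal{P}_1)_{L_{p;\alpha,\beta}(T_i^N)}^p
	\ge (c'\varepsilon^2)^p (c^{-1}\varepsilon)\, N^{-1/2} \sum_{i\in M_3(\triangle_N;\varepsilon)} |T_i^N|.
\end{equation*}
Under the contradiction hypothesis $\sum_{i\in M_3(\triangle_N;\varepsilon)}|T_i^N| \ge \varepsilon$, the right-hand side is at least $c''\varepsilon^{2p+2} N^{-1/2}$, so $N\cdot\inf_{s}\|f-s\|_{p;\alpha,\beta} \ge N \cdot (c''\varepsilon^{2p+2})^{1/p} N^{-1/(2p)} = (c''\varepsilon^{2p+2})^{1/p}\,N^{1-1/(2p)} \to \infty$ since $1-1/(2p)>0$ for $p\ge 1$. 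This contradicts assumption~(\ref{assumption_of_lemma_X3}), and the lemma follows.

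The main obstacle I anticipate is making the passage from the $M_3$-defining inequality to a clean lower bound of the form $E(f;\mathcal{P}_1)_{L_{p;\alpha,\beta}(T_i^N)} \gtrsim \varepsilon^2 |T_i^N|^{1+1/p}$ fully rigorous: one must carefully track how $\mathrm{diam}\,T_i^N$, $|T_i^N|$, $\omega_2(f,\mathrm{diam}\,T_i^N)$ and $H(f;U_{T_i^N})$ interact, and in particular ensure that the modulus-of-smoothness factor $\omega_2(f,\mathrm{diam}\,T_i^N)$ can be bounded above by a constant uniformly over the (shrinking) triangles — which is immediate since $f\in C^2(D)$ implies $\omega_2(f,\delta)$ is bounded by $\|D^2f\|_{\infty}$ on all of $D$. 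A secondary technical point is the summation step: since there is no a priori bound on $\#M_3(\triangle_N;\varepsilon)$, one should avoid Jensen-type inequalities with an unknown number of terms and instead use the uniform smallness $|T_i^N| \le (c^{-1}\varepsilon)N^{-1/2}$ to pull the area factor linearly out of the sum, as done above.
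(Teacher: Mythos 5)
Your overall strategy --- contradiction, Lemma~\ref{L}, and the defining inequality of $M_3(\triangle_N;\varepsilon)$ --- is the same as the paper's, but the execution has a genuine gap at the summation step. The claimed inequality
\[
	\sum_{i\in M_3(\triangle_N;\varepsilon)} E^p(f;\mathcal{P}_1)_{L_{p;\alpha,\beta}(T_i^N)}
	\;\geqslant\;
	(c'\varepsilon^2)^p\,(c^{-1}\varepsilon)\,N^{-1/2}\sum_{i\in M_3(\triangle_N;\varepsilon)}\left|T_i^N\right|
\]
does not follow from $E(f;\mathcal{P}_1)_{L_{p;\alpha,\beta}(T_i^N)}\geqslant c'\varepsilon^2\left|T_i^N\right|^{1+1/p}$ and $\left|T_i^N\right|\leqslant A:=(c^{-1}\varepsilon)N^{-1/2}$. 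Indeed, $\left|T_i^N\right|^{p+1}=\left|T_i^N\right|^{p}\cdot\left|T_i^N\right|\leqslant A^{p}\left|T_i^N\right|$, so the uniform smallness of the areas yields an \emph{upper} bound of the form $\mathrm{const}\cdot N^{-p/2}\left|T_i^N\right|$; to ``pull the area factor out linearly'' as you do, you would need $\left|T_i^N\right|^{p}\geqslant A$, i.e.\ a \emph{lower} bound on individual areas, which is not available. More structurally: for a convex sum $\sum a_i^{p+1}$ with $\sum a_i\geqslant\varepsilon$ fixed, an upper bound on the $a_i$ can never improve the lower bound --- the extremal (minimizing) configuration is the equidistributed one, and the best available estimate is Jensen's $\sum a_i^{p+1}\geqslant(\#M_3)^{-p}(\sum a_i)^{p+1}\geqslant N^{-p}\varepsilon^{p+1}$, using only $\#M_3(\triangle_N;\varepsilon)\leqslant N$. (Your stated reason for avoiding Jensen, that $\#M_3$ is a priori unbounded, is unfounded: it is trivially at most $N$.)

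There is a second, equally serious problem: by bounding $\omega_2(f,\textrm{diam}\,T_i^N)$ above by the constant $\|D^2f\|_\infty$ you discard the very mechanism that produces the contradiction. If one performs the summation correctly via Jensen but with $\omega_2$ replaced by a constant, one only obtains $N^p\cdot\inf_{s}\|f-s\|_{p;\alpha,\beta}^p\geqslant\mathrm{const}>0$, which is perfectly consistent with~(\ref{assumption_of_lemma_X3}) and yields no contradiction. The paper instead keeps the $M_3$ inequality in the form $E(f;\mathcal{P}_1)_{L_{p;\alpha,\beta}(T_i^N)}>c_2 H^{1/2}\left(f;U_{T_i^N}\right)\left|T_i^N\right|^{1+1/p}\big/\omega_2\left(f,\textrm{diam}\,T_i^N\right)$, bounds $\omega_2\left(f,\textrm{diam}\,T_i^N\right)\leqslant\omega_2\left(f,\varepsilon N^{-1/4}\right)$ using the diameter restriction in $M_3$, and then the divergence $\omega_2\left(f,\varepsilon N^{-1/4}\right)\to0$ (valid since $f\in C^2$) forces $N^p\mathcal{F}(\triangle_N)\to\infty$. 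A minor further slip: to control $\sqrt{H\left(f;U_{T_i^N}\right)}$ from the right side you need the lower bound $H\geqslant\varepsilon$ on $F_\varepsilon$, not the upper bound $\sqrt{H}\leqslant\|\sqrt H\|_\infty$ that you invoke.
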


\begin{proof} 
Let number $\varepsilon>0$ be fixed. 
Assume to the contrary that there exists a subsequence $\{N_k\}_{k=1}^{\infty}$ of positive integers, such that $N_k\to \infty$ as $k\to \infty$, and
\begin{equation}
\label{1assump}
	\sum_{i\in M_3(\triangle_{N_k};\varepsilon)}\left|T^{N_k}_i\right|
	\geqslant
	\varepsilon.
\end{equation}
Without loss of generality we let $N_k=k$ for every $k\in\NN$. Applying Lemma~\ref{L} and the definition of the set $M_3(\triangle_N;\varepsilon)$, for every $N\in\NN$ and $i\in M_3(\triangle_N;\varepsilon)$, we obtain 
$$
	\begin{array}{rcl}
		E(f,\mathcal{P}_1)_{L_{p;\alpha,\beta}\left(T_i^N\right)} 
			& 
		\geqslant 
			& 
		\displaystyle \varepsilon\Upsilon_{f} \left(\textrm{diam}\,T_i^N\right)^2\left|T_i^N\right|^{1/p}
		>
		\varepsilon\Upsilon_f\cdot\frac{\varepsilon C_{p;\alpha,\beta}}{4\max\{\alpha;\beta\}}\cdot\frac{H^{1/2}\left(f;U_{T_i^N}\right)\left|T_i^N\right|^{1+1/p}}{\omega_2\left(f,{\rm diam}\,T_i^N\right)} 
		\\ [7pt] 
			& 
		=: 
			& 
		\displaystyle \frac{c_2 H^{1/2}\left(f;U_{T_i^N}\right)\left|T_i^N\right|^{1+1/p}}{\omega_2\left(f, \textrm{diam}\,T_i^N\right)}. 
	\end{array}
$$
Here $c_2=c_2(\varepsilon)$ is the constant independent of $N$. Then
$$
	\begin{array}{rcl} 
		\mathcal{F}(\triangle_N) 
			& 
		:= 
			& 
		\displaystyle\inf\limits_{s\in\mathcal{S}(\triangle_N)} \|f-s\|_{p;\alpha,\beta}^p
		\geqslant 
		\sum\limits_{i=1}^N E^p\left(f,\mathcal{P}_1\right)_{L_{p;\alpha,\beta}\left(T_i^N\right)} 
		\geqslant 
		\displaystyle \sum\limits_{i\in M_3(\triangle_N;\varepsilon)} E^p(f,\mathcal{P}_1)_{L_{p;\alpha,\beta}\left(T_i^N\right)} 
		\\
			& 
		> 
			&
		\displaystyle c^p_2 \sum\limits_{i\in M_3(\triangle_N;\varepsilon)} \frac{H^{p/2}\left(f;U_{T_i^N}\right)\left|T_i^N\right|^{1+1/p}}{\omega_2\left(f,\textrm{diam}\,T_i^N\right)}
		\geqslant 
		\displaystyle \frac{c_2^p\varepsilon^{p/2}}{\omega_2^p\left(f,\varepsilon N^{-1/4}\right)} \sum\limits_{i\in M_3(\triangle_N;\varepsilon)}\left|T_i^N\right|^{p+1}. 
	\end{array}
$$

Using convexity of the function $t^{p+1}$ and assumption~(\ref{1assump}), we have 
$$
	\mathcal{F}(\triangle_N) 
	> 
	\displaystyle\frac{c_2^p\varepsilon^{p/2}}{\omega_2^p\left(f,\varepsilon N^{-1/4}\right)} \sum\limits_{i\in M_3(\triangle_N;\varepsilon)}\left|T_i^N\right|^{p+1} 
	\geqslant 
	\displaystyle\frac{c_2^p\varepsilon^{1+3p/2}\left(\# M_3(\triangle_N;\varepsilon)\right)^{-p}}{\omega_2^p\left(f,\varepsilon N^{-1/4}\right)} 
	\geqslant 
	\frac{c_2^p\varepsilon^{1 + 3p/2}}{N^p \omega_2^p\left(f,\varepsilon N^{-1/4}\right)}. 
$$
Therefore, 
$$
	\liminf\limits_{N\to\infty} N\cdot\inf\limits_{s\in\mathcal{S}(\triangle_N)} \|f-s\|_{p;\alpha,\beta} 
	\geqslant 
	c_2\varepsilon^{3/2 + 1/p}\cdot \lim\limits_{N\to\infty} \omega_2^{-1}\left(f,\varepsilon N^{-1/4}\right) 
	= 
	+\infty. 
$$
The latter contradicts the assumption~(\ref{assumption_of_lemma_X3}). The lemma is proved. 
\end{proof}

\begin{lemma}
\label{L12}
Let $\varepsilon>0$ and let $\left\{\triangle_N\right\}_{N=1}^{\infty}$ be the sequence of triangulations $\triangle_N = \left\{T_i^N\right\}_{i=1}^{N}$ of $D$. If~(\ref{assumption_of_lemma_X3}) holds true then for all $N$ large enough inequality~(\ref{lower_area_bound}) also holds true. 
\end{lemma}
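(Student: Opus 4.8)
The plan is to follow the template of Lemma~\ref{L11}; the situation for the index set $M_4(\triangle_N;\varepsilon)$ (the extra-long triangles, for which membership already forces $\textrm{diam}\,T_i^N>\varepsilon N^{-1/4}$) is in fact somewhat simpler, since a usable lower bound on the local error comes straight out of Lemma~\ref{L}, with no appeal to the convexity of $t\mapsto t^{p+1}$ that was needed for the narrow triangles. I argue by contradiction: if the assertion fails, there is a subsequence of indices $N$ along which $\sum_{i\in M_4(\triangle_N;\varepsilon)}\left|T_i^N\right|\geqslant\varepsilon$, and, exactly as in the proof of Lemma~\ref{L11}, we may relabel this subsequence as $N=1,2,\dots$.

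Next I fix such an $N$ and an index $i\in M_4(\triangle_N;\varepsilon)$. Since $T_i^N\subset F_\varepsilon$ we have $H(f;x,y)\geqslant\varepsilon$ for every $(x,y)\in T_i^N$, so Lemma~\ref{L} applied with $K=\varepsilon$ gives
$$
	E(f,\mathcal{P}_1)_{L_{p;\alpha,\beta}\left(T_i^N\right)}
	\geqslant
	\varepsilon\,\Upsilon_f\left(\textrm{diam}\,T_i^N\right)^2\left|T_i^N\right|^{1/p}
	>
	\varepsilon^3\,\Upsilon_f\, N^{-1/2}\left|T_i^N\right|^{1/p},
$$
the last step using $\left(\textrm{diam}\,T_i^N\right)^2>\varepsilon^2 N^{-1/2}$. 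Since for any $s\in\mathcal{S}(\triangle_N)$ the $p$-th power of the asymmetric norm is additive over the triangulation and $s$ is linear on every $T_i^N$, raising the previous bound to the power $p$ and summing yields
$$
	\inf_{s\in\mathcal{S}(\triangle_N)}\|f-s\|_{p;\alpha,\beta}^p
	\geqslant
	\sum_{i\in M_4(\triangle_N;\varepsilon)} E^p(f,\mathcal{P}_1)_{L_{p;\alpha,\beta}\left(T_i^N\right)}
	>
	\left(\varepsilon^3\Upsilon_f\right)^p N^{-p/2}\sum_{i\in M_4(\triangle_N;\varepsilon)}\left|T_i^N\right|
	\geqslant
	\left(\varepsilon^3\Upsilon_f\right)^p\varepsilon\, N^{-p/2}.
$$
Taking $p$-th roots and multiplying by $N$ gives $N\cdot\inf_{s\in\mathcal{S}(\triangle_N)}\|f-s\|_{p;\alpha,\beta}>\varepsilon^{3+1/p}\Upsilon_f\, N^{1/2}\to\infty$, contradicting~(\ref{assumption_of_lemma_X3}). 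Hence $\sum_{i\in M_4(\triangle_N;\varepsilon)}\left|T_i^N\right|<\varepsilon$ for all $N$ large enough.

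There is no serious obstacle here; the points needing a little care are the relabeling of the ``bad'' subsequence (verbatim as in Lemma~\ref{L11}) and the additivity identity $\|f-s\|_{p;\alpha,\beta}^p=\sum_{i}\|f-s\|_{L_{p;\alpha,\beta}(T_i^N)}^p$, which is where the standing restriction $1\leqslant p<\infty$ fixed at the start of Section~\ref{S8} enters. The essential simplification compared with Lemma~\ref{L11} is that the extra-long triangles of $M_4$ come with a built-in lower bound on $\textrm{diam}\,T_i^N$, so Jensen's inequality plays no role.
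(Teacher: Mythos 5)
Your proof is correct and follows essentially the same route as the paper: contradiction via a subsequence, Lemma~\ref{L} with $K=\varepsilon$ on the extra-long triangles of $M_4(\triangle_N;\varepsilon)$, the built-in bound $\textrm{diam}\,T_i^N>\varepsilon N^{-1/4}$, and additivity of $\|\cdot\|_{p;\alpha,\beta}^p$ over the triangulation, yielding the divergent lower bound $\varepsilon^{3+1/p}\Upsilon_f\sqrt{N}$. The only cosmetic difference is that the paper's text cites Lemma~\ref{L10} at this point, while the inequality actually used is the one from Lemma~\ref{L}, exactly as you invoke it.
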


\begin{proof} 
Let number $\varepsilon>0$ be fixed. Assume to the contrary that there exists a subsequence $\{N_k\}_{k=1}^{\infty}$ of positive integers, such that $N_k\to \infty$ as $k\to \infty$ and inequality~(\ref{1assump}) holds true. Without loss of generality we let $N_k=k$, $k\in\NN$. By Lemma~\ref{L10} and the definition of the set $M_4(\triangle_N;\varepsilon)$ for every $N\in\NN$, we have
$$
	\begin{array}{rcl}
		\mathcal{F}(\triangle_N) 
			& 
		:= 
			& 
		\displaystyle\inf\limits_{s\in\mathcal{S}(\triangle_N)} \|f-s\|_{p;\alpha,\beta}^p
		\geqslant 
		\sum\limits_{i=1}^N E^p(f,\mathcal{P}_1)_{L_{p;\alpha,\beta}\left(T_i^N\right)} 
		\geqslant 
		\displaystyle \sum\limits_{i\in M_4(\triangle_N;\varepsilon)} \varepsilon^p \Upsilon_f^{p} \left(\textrm{diam}\,T_i^N\right)^{2p} \left|T_i^N\right| 
		\\
			& 
		\geqslant 
			& 
		\displaystyle \varepsilon^{3p}\Upsilon^p_f N^{-p/2} \sum\limits_{i\in M_4(\triangle_N;\varepsilon)} \left|T_i^N\right| 
		\geqslant 
		\varepsilon^{3p+1}\Upsilon^p_f N^{-p/2}. 
	\end{array}
$$
Therefore, 
$$
	\liminf_{N\to\infty} N\cdot\inf\limits_{s\in\mathcal{S}(\triangle_N)} \|f-s\|_{p;\alpha,\beta} 
	\geqslant 
	\varepsilon^{3+1/p}\Upsilon_f \lim\limits_{N\to\infty} \sqrt{N} 
	= 
	+\infty.
$$
The latter contradicts to assumption~(\ref{assumption_of_lemma_X3}) of lemma. The lemma is proved. 
\end{proof}

\begin{lemma}
\label{L13}
Let $\varepsilon>0$ and let $\left\{\triangle_N\right\}_{N=1}^{\infty}$ be the sequence of triangulations $\triangle_N = \left\{T_i^N\right\}_{i=1}^{N}$ of $D$. If ~(\ref{assumption_of_lemma_X3}) holds true then for all $N$ large enough inequality~(\ref{lower_area_bound}) also holds true. 
\end{lemma}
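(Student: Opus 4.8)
The plan is to treat Lemma~\ref{L13} as the $M_5$-analogue of Lemmas~\ref{L11} and~\ref{L12}: assuming~(\ref{assumption_of_lemma_X3}), we want $\sum_{i\in M_5(\triangle_N;\varepsilon)}\left|T_i^N\right|<\varepsilon$ for all $N$ large enough, which is~(\ref{lower_area_bound}) with $M_3$ replaced by $M_5$. As in the two preceding proofs I would argue by contradiction, passing to a subsequence (relabelled as $\NN$) along which $\sum_{i\in M_5(\triangle_N;\varepsilon)}\left|T_i^N\right|\geqslant\varepsilon$, and aim to show that $N\cdot\inf_{s\in\mathcal{S}(\triangle_N)}\|f-s\|_{p;\alpha,\beta}\to\infty$, contradicting~(\ref{assumption_of_lemma_X3}).

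The new difficulty, compared with Lemma~\ref{L12}, is that a triangle $T_i\in M_5(\triangle_N;\varepsilon)$ need not lie in $F_{\varepsilon}$ --- by definition it only meets $F_{2\varepsilon}$ while also meeting $A_{\varepsilon}$ --- so Lemma~\ref{L} does not apply to $T_i$ itself, and I must first replace $T_i$ by a ``good'' sub-triangle. Here I would use the uniform continuity of $H(f;\cdot)$ on $D$ (which holds since $f\in C^2(D)$) to fix, once and for all, a radius $\delta=\delta(\varepsilon)>0$ independent of $N$ such that $H(f;\cdot)$ oscillates by less than $\varepsilon$ on every subset of $D$ of diameter at most $\delta$. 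Then, for $T_i\in M_5(\triangle_N;\varepsilon)$ and a point $O\in T_i$ with $H(f;O)\geqslant 2\varepsilon$ (nudged slightly into the interior of $T_i$ if necessary, which only affects constants), the Hessian is $\geqslant\varepsilon$ on the disk $B$ of radius $\delta$ centered at $O$; and since $T_i$ also contains a point where $H(f;\cdot)<\varepsilon$, the same oscillation bound forces $\textrm{diam}\,T_i\geqslant\delta$. Because $T_i\subseteq[0,1]^2$ satisfies $\textrm{diam}\,T_i\leqslant\sqrt 2$, Lemma~\ref{L10} then produces a triangle $T_i'\subseteq T_i\cap B$ with $\left|T_i'\right|\geqslant K_{\delta}^2\left|T_i\right|$ and $\textrm{diam}\,T_i'\geqslant K_{\delta}\,\textrm{diam}\,T_i\geqslant K_{\delta}\delta$, where $K_{\delta}=\min\{1;\delta^2/2\}$ and all constants depend only on $\varepsilon$.

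With $H(f;\cdot)\geqslant\varepsilon$ on $T_i'$, Lemma~\ref{L} applied with $K=\varepsilon$, together with the obvious monotonicity of $E(f,\mathcal{P}_1)_{L_{p;\alpha,\beta}(\cdot)}$ under shrinking of the domain, gives $E(f,\mathcal{P}_1)_{L_{p;\alpha,\beta}(T_i^N)}\geqslant E(f,\mathcal{P}_1)_{L_{p;\alpha,\beta}(T_i')}\geqslant\varepsilon\Upsilon_f(\textrm{diam}\,T_i')^2\left|T_i'\right|^{1/p}\geqslant c_3\left|T_i^N\right|^{1/p}$ for some constant $c_3=c_3(\varepsilon)>0$ independent of $N$. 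Summing over $i\in M_5(\triangle_N;\varepsilon)$ exactly as in Lemmas~\ref{L11} and~\ref{L12} --- using $\inf_{s\in\mathcal{S}(\triangle_N)}\|f-s\|_{p;\alpha,\beta}^p\geqslant\sum_{i=1}^N E^p(f,\mathcal{P}_1)_{L_{p;\alpha,\beta}(T_i^N)}$ and the contradiction hypothesis $\sum_{i\in M_5(\triangle_N;\varepsilon)}\left|T_i^N\right|\geqslant\varepsilon$ --- yields $\inf_{s\in\mathcal{S}(\triangle_N)}\|f-s\|_{p;\alpha,\beta}^p\geqslant c_3^p\varepsilon$, hence $N\cdot\inf_{s\in\mathcal{S}(\triangle_N)}\|f-s\|_{p;\alpha,\beta}\geqslant c_3\varepsilon^{1/p}N\to\infty$, which is the desired contradiction.

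I expect the only genuine obstacle to be the sub-triangle construction in the second paragraph: one must extract a radius $\delta$ and a diameter lower bound $\textrm{diam}\,T_i\geqslant\delta$ that are uniform in $N$, and verify the hypotheses of Lemma~\ref{L10} (interior base point, diameter $\leqslant\sqrt 2$). After that the estimate is in fact softer than the one in Lemma~\ref{L12}, because the diameter bound $K_{\delta}\delta$ does not shrink with $N$, so no additional power of $N$ is needed --- the mere positivity of $c_3$ already produces the contradiction --- and the remaining summation step merely repeats what was done for $M_3$ and $M_4$.
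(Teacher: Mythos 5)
Your proof is correct and follows essentially the same route as the paper's: argue by contradiction, use the uniform continuity of $H$ to find a disk of fixed radius $\delta(\varepsilon)$ around a point where $H\geqslant 2\varepsilon$ on which $H\geqslant\varepsilon$, note that the definition of $M_5$ forces $\textrm{diam}\,T_i^N\geqslant\delta$, extract a sub-triangle via Lemma~\ref{L10}, and apply Lemma~\ref{L} with the fixed diameter lower bound so that $\inf_s\|f-s\|_{p;\alpha,\beta}^p$ is bounded below by a positive constant, contradicting~(\ref{assumption_of_lemma_X3}). The paper's only cosmetic difference is that it centers the ball at a point $L$ on the segment joining the two witness points where $H(f;L)=3\varepsilon/2$ and calibrates $\omega(H,\delta_0)=\varepsilon/6$, rather than centering at the point where $H\geqslant 2\varepsilon$ as you do.
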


\begin{proof} 
Let number $\varepsilon>0$ be fixed and let $\delta_0$ be the minimal positive number such that $\omega(H,\delta_0) = \varepsilon/6$. Assume to the contrary that there exists a subsequence $\{N_k\}_{k=1}^{\infty}$ of positive integers, such that $N_k\to \infty$ as $k\to \infty$ and inequality~(\ref{1assump}) holds true. Without loss of generality we let $N_k=k$ for every $k\in\NN$. 

By the definition of the set $M_5(\triangle_N;\varepsilon)$, for every $i\in M_5(\triangle_N;\varepsilon)$ there exist points $L_1,L_2\in T_i^N$ such that $H(f;L_1)<\varepsilon$ and $H(f;L_2)\geqslant 2\varepsilon$. By $L$ we denote the point on the segment $L_1 L_2$ such that $H(f;L)=3\varepsilon/2$. We also denote by $B$ the ball centered at the point $L$ with the radii $\delta_0$. Evidently, 
$$
	D\cap B \subset A_{2\varepsilon}\cap F_{\varepsilon}. 
$$
Hence, $H(f;x,y)\geqslant \varepsilon$ for every point $(x,y)\in D\cap B$. By Lemma~\ref{L10} there exists a triangle $\widetilde{T}_i^N\subset B\cap T_i^N$ such that $\left|\widetilde{T}_i^N\right| \geqslant K_{\delta_0}^2 \left|T_i^N\right|$ and $\textrm{diam}\,\widetilde{T}_i^N \geqslant K_{\delta_0}\textrm{diam}\,T_i^N$ where $K_{\delta_0}$ was defined in Lemma~\ref{L10}. In addition, $\textrm{diam}\,T_i^N\geqslant 2\delta_0$. Then in view of Lemma~\ref{L}, 
$$
	\begin{array}{rcl}
		\mathcal{F}(\triangle_N) 
			& 
		:= 
			& 
		\displaystyle\inf\limits_{s\in\mathcal{S}(\triangle_N)} \|f-s\|_{p;\alpha,\beta}^p
		\geqslant 
		\sum\limits_{i=1}^N E^p(f,\mathcal{P}_1)_{L_{p;\alpha,\beta}\left(T_i^N\right)} 
		\geqslant 
		\displaystyle \sum\limits_{i\in M_5(\triangle_N;\varepsilon)} E^p(f,\mathcal{P}_1)_{L_{p;\alpha,\beta}\left(T_i^N\right)} 
		\\
			& 
		\geqslant 
			& 
		\displaystyle\sum\limits_{i\in M_5(\triangle_N;\varepsilon)}  E^p(f,\mathcal{P}_1)_{L_{p;\alpha,\beta}\left(\widetilde{T}_i^N\right)} 
		\geqslant 
		\displaystyle \sum\limits_{i\in M_5(\triangle_N;\varepsilon)} \varepsilon^p\Upsilon_f^p\left(\textrm{diam}\,\widetilde{T}_i^N\right)^{2p}\left|\widetilde{T}_i^N\right| 
		\\ 
			& 
		\geqslant 
			& 
		\displaystyle \varepsilon^p\Upsilon_f^p K_{\delta_0}^{2p+2} \sum\limits_{i\in M_5(\triangle_N;\varepsilon)} \left(\textrm{diam}\, T_i^N\right)^{2p}\left|T_i^N\right| 
		\geqslant \varepsilon^{p+1}\Upsilon_f^p K_{\delta_0}^{2p+2} (2\delta_0)^{2p}. 
	\end{array}
$$
Therefore, 
$$
	\liminf\limits_{N\to\infty} N\cdot\inf\limits_{s\in \mathcal{S}(\triangle_N)} \|f-s\|_{p;\alpha,\beta} 
	= 
	+\infty 
$$
which contradicts to assumption~(\ref{assumption_of_lemma_X3}) of lemma. The lemma is proved. 
\end{proof} 

Now we have all facts needed to prove the lower estimate~(\ref{lower_estimate}). To that end we need to show that for every sequence $\{\triangle_N\}_{N=1}^{\infty}$ of triangulations $\triangle_N=\{T_i^N\}_{i=1}^N$ of $D$,
$$
	\liminf\limits_{N\to\infty} N\cdot\inf\limits_{s\in\mathcal{S}(\triangle_N)} \|f-s\|_{p;\alpha,\beta} 
	\geqslant 
	2^{-1} C_{p;\alpha,\beta}\left\|\sqrt{H}\right\|_{\frac{p}{p+1}}. 
$$
Without loss of generality we consider only those sequences $\{\triangle_N\}_{N=1}^{\infty}$ for which
$$
	\liminf\limits_{N\to\infty} N\cdot\inf\limits_{s\in\mathcal{S}(\triangle_N)} \|f-s\|_{p;\alpha,\beta} 
	< 
	\infty. 
$$

For every $i\in M_2(\triangle_N;\varepsilon)$ we substitute the function $f$ on the triangle $T_i^N$ by its Taylor polynomial of second order $f_{N,i}$ constructed at the point $U_{T_i^{N}}$. In view of Lemma~\ref{L3}, we have 
$$
	\begin{array}{rcl}
		\left\|f-f_{N,i}\right\|_{L_{p; \alpha , \beta }(T^{N}_i)} 
			& 
		\leqslant 
			& 
		\max\{\alpha;\beta\}\|f-f_{N,i}\|_{L_{\infty}(T_i^{N})}|T_i^{N}|^{1/p} 
		\\ [5pt] 
			& 
		\leqslant 
			& 
		2\max\{\alpha;\beta\}(\textrm{diam}\,T_i^N)^2\omega_2(f, \textrm{diam}\,T_i^N)|T_i^{N}|^{1/p}.
	\end{array}
$$
By the definition of the set $M_2(\triangle_N;\varepsilon)$, 
$$
	\left\|f-f_{N,i}\right\|_{L_{p; \alpha , \beta }\left(T^{N}_i\right)} 
	\leqslant 
	2^{-1}\varepsilon  C_{p;\alpha,\beta}\cdot H^{1/2}\left(f;U_{T_i^N}\right)\left|T_i^N\right|^{1+1/p}.
$$
Applying the triangle inequality and Corollary~\ref{cor}, we obtain 
$$
	\begin{array}{rcl}
		\displaystyle E(f,\mathcal{P}_1)_{L_{p;\alpha,\beta}\left(T_i^N\right)} 
			& 
		\geqslant 
			& 
		\displaystyle E(f_{N;i},\mathcal{P}_1)_{L_{p;\alpha,\beta}\left(T_i^N\right)} -\|f-f_{N;i}\|_{L_{p;\alpha,\beta}\left(T_i^N\right)} 
		\\ 
			& 
		\geqslant 
			& 
		\displaystyle 2^{-1}(1-\varepsilon)C_{p;\alpha,\beta}\cdot H^{1/2}\left(f;U_{T_i^N}\right)\left|T_i^N\right|^{1+1/p}. 
	\end{array}
$$
Now, 
$$
	\begin{array}{rcl}
		\mathcal{F}(\triangle_N) 
			& 
		:= 
			& 
		\displaystyle\inf\limits_{s\in\mathcal{S}(\triangle_N)} \|f-s\|_{p;\alpha,\beta}^p
		\geqslant 
		\sum\limits_{i=1}^N E^p(f,\mathcal{P}_1)_{L_{p;\alpha,\beta}\left(T_i^N\right)} 
		\geqslant 
		\displaystyle \sum\limits_{i\in M_2(\triangle_N;\varepsilon)} E^p(f,\mathcal{P}_1)_{L_{p;\alpha,\beta}\left(T_i^N\right)} 
		\\
			& 
		\geqslant 
			& 
		\displaystyle 2^{-p}(1-\varepsilon)^p C^p_{p;\alpha,\beta} \sum\limits_{i\in M_2(\triangle_N;\varepsilon)}H^{ p/2}\left(f;U_{T_i^N}\right)\left|T_i^N\right|^{p+1}. 
	\end{array}
$$
Application of the Jensen inequality for the function $t^{p+1}$ implies that 
$$
	\begin{array}{rcl}
		\mathcal{F}(\triangle_N) 
			& 
		\geqslant 
			& 
		\displaystyle \frac{(1-\varepsilon)^pC^p_{p;\alpha,\beta}}{2^p\left(\# M_2(\triangle_N;\varepsilon)\right)^p}  \left(\sum\limits_{i\in M_2(\triangle_N;\varepsilon)} H^{\frac{p}{2(p+1)}}\left(f;U_{T_i^N}\right)\left|T_i^N\right|\right)^{p+1}
		\\ 
			& 
		\geqslant 
			& 
		\displaystyle \frac{(1-\varepsilon)^pC^p_{p;\alpha,\beta}}{2^pN^p}  \left(\sum\limits_{i\in M_2(\triangle_N;\varepsilon)} H^{\frac{p}{2(p+1)}}\left(f;U_{T_i^N}\right)\left|T_i^N\right|\right)^{p+1}. 
	\end{array}
$$

Let us subdivide each triangle $T_i^{N}$, $i\in I_N\setminus M_2(\triangle_N;\varepsilon)$, into $n_i^{N}$ smaller triangles $T_{i,j}^{N}$, $j=1,\ldots,n_i^{N}$, enumerated in arbitrary order, such that $\textrm{diam}\,T_{i,j}^{N}\to 0$ as $N\to\infty$ for all $j=1,\ldots,n_i^{N}$. For every $i\in M_2(\triangle_N;\varepsilon)$, set $n_i^{N}:=1$ and $T_{i,1}^{N}:=T_i^{N}$. Observe that $\bigcup\limits_{i=1}^{N}\bigcup\limits_{j=1}^{n_i^{N}}T_{i,j}^{N}=D$, and for every appropriate $i$ and $j$, it follows that $\textrm{diam}\,T_{i,j}^{N}\to 0$ as $N\to\infty$. Then 
$$
	\sum\limits_{i\in M_1(\triangle_N;\varepsilon)}\sum_{j=1}^{n_i^N} H^{\frac{p}{2(p+1)}}\left(f;U_{T_{i,j}^N}\right)\left|T_{i,j}^N\right| 
	\leqslant 
	(2\varepsilon)^p \sum\limits_{i\in M_1(\triangle_N;\varepsilon)} \left|T_i^N\right| 
	\leqslant 
	(2\varepsilon)^{p}\cdot 1 
	= 
	(2\varepsilon)^p. 
$$
In addition, by Lemmas~\ref{L11},~\ref{L12}, and~\ref{L13} for all $N$ large enough we have for $r=3,4,5$,
$$
	\sum\limits_{i\in M_r(\triangle_N;\varepsilon)}\sum_{j=1}^{n_i^N} H^{\frac{p}{2(p+1)}}\left(f;U_{T_{i,j}^N}\right)\left|T_{i,j}^N\right| 
	\leqslant 
	\|H\|_{\infty}^{\frac p{2(p+1)}} \sum\limits_{i\in M_r(\triangle_N;\varepsilon)} \left|T_i^N\right| 
	\leqslant 
	\varepsilon\|H\|_{\infty}^{\frac p{2(p+1)}}.
$$

Therefore, 
$$
	\begin{array}{rcl}
		\mathcal{F}(\triangle_N) 
			& 
		\geqslant 
			&
		\displaystyle\left(\frac{(1-\varepsilon)C_{p;\alpha,\beta}}{2N}\right)^p\left(\sum_{i=1}^{N}\sum_{j=1}^{n_i^{N}} H^{\frac{p}{2(p+1)}}\left(f;U_{T_{i,j}^{N}}\right)\left|T_{i,j}^{N}\right|  -  3\varepsilon\|H\|_{\infty}^{\frac p{2(p+1)}} - (2\varepsilon)^{p}\right)^{p+1}
		\\
			& 
		= 
			& 
		\displaystyle\left(\frac{C_{p;\alpha,\beta}}{2N}\right)^p\left(\int_D H^{\frac{p}{2(p+1)}}(f;x,y)\,dx\,dy + o(1) - 3\varepsilon\|H\|_{\infty}^{\frac p{2(p+1)}} - (2\varepsilon)^{p}\right)^{p+1},
	\end{array}
$$
as $N\to\infty$. Hence,
$$
	\liminf_{N\to\infty}N^p \cdot\inf\limits_{s\in S(\triangle_N)}\|f-s\|^p_{p; \alpha , \beta}
	\geqslant 
	2^{-p}C^p_{p;\alpha,\beta}\left(\int_D H^{\frac{p}{2(p+1)}}(f;x,y) \,dx\,dy - 3\varepsilon\|H\|_{\infty}^{\frac p{2(p+1)}} - (2\varepsilon)^{p}\right)^{p+1} . 
$$
Since $\varepsilon$ is arbitrary, we obtain the desired  inequality~(\ref{lower_estimate}). 




\section{Asymptotically optimal sequences of triangulations}
\label{S10}

We now choose the asymptotically optimal sequence of triangulations $\{\triangle_N^*\}_{N=1}^{\infty}$ and the corresponding splines $\{s_N^*\}_{N=1}^{\infty}$, that will give the answer to the second question addressed in this paper. 

Let $\{\varepsilon\}_{k=1}^{\infty}$ be a decreasing sequence of positive numbers which tends to zero as $k\to \infty$. In Section \ref{S7.2} for every $\varepsilon>0$, we have constructed sequences of triangulations $\left\{\triangle_N^{\varepsilon}\right\}$ of the set $D$ and corresponding piecewise linear splines $\left\{s_N^{\varepsilon}\right\}$ for every $N$ large enough. By $N(\varepsilon)$ let us denote the minimal number $N$ for which the triangulation $\Delta_N^{\varepsilon}$ and $s_N^{\varepsilon}$ were constructed. Without loss of generality we may assume that the sequence of numbers $\{N\left(\varepsilon_k\right)\}_{k=1}^{\infty}$ is strictly increasing. Then, we set 
$$
	\triangle_N^*
	:=
	\triangle_N^{\varepsilon_k},\quad 
	s_N^*
	:=
	s_N^{\varepsilon_k},\qquad \textrm{if}\qquad N(\varepsilon_k)<N\leqslant N(\varepsilon_{k+1}), \quad k\in \NN,
$$
where triangulations $\triangle_N^{\varepsilon}$ and splines $s_N^{\varepsilon}$ were defined in Section~\ref{S7.2}. For $1\leqslant N\leqslant \widetilde{N}(\varepsilon_1)$, we may take $\triangle_N^*$ to be an arbitrary triangulation of $D$, and $s_N^*\in S(\triangle_N^*)$ to be an arbitrary spline.

The above constructed sequence will be asymptotically optimal. Indeed, because of~(\ref{new_ineq}), for all $1\leqslant p<\infty$ and for every $N(\varepsilon_k)<N\leqslant N(\varepsilon_{k+1})$, we have 
$$
	R_N\left(f,L_{p; \alpha , \beta }\right)
	\leqslant 
	\left\|f-s_N^*\right\|_{p; \alpha , \beta } 
	\leqslant 
	\frac{\left(1+k_2\varepsilon_k\right)C_{p;\alpha,\beta}}{2N}\left( \int_{D}H^{\frac{p}{2(p+1)}}(f;x,y)\,dx dy\right )^{1+1/p}.
$$
On the other hand, for every $N$ large enough, 
$$
	R_N(f,L_{p;\alpha,\beta})
	\geqslant 
	\frac{C_{p;\alpha,\beta}}{2\left(1+k_3\varepsilon_k\right)N}\left(\int_D H^{\frac{p}{2(p+1)}}(f;x,y)\,dx\,dy\right)^{1+1/p}. 
$$
Combining the last two inequalities and letting $k\to\infty$, we obtain the desired
$$
	\lim\limits_{N\to\infty} N \cdot R_N(f,L_{p;\alpha,\beta}) 
	= 
	\lim\limits_{N\to\infty} N\|f - s_N^*\|_{p;\alpha,\beta}.
$$

\end{document}